\theoremstyle{plain}
\newtheorem{theorem}{Theorem}[section]
\newtheorem{lemma}[theorem]{Lemma}
\newtheorem{proposition}[theorem]{Proposition}
\newtheorem{corollary}[theorem]{Corollary}
\newtheorem*{conjecture}{Conjecture}
\theoremstyle{remark}
\newtheorem{remark}[theorem]{Remark}
\theoremstyle{definition}
\newtheorem{example}[theorem]{Example}
\newtheorem{definition}[theorem]{Definition}
\numberwithin{equation}{section}
\DeclareMathOperator{\Id}{Id}
\DeclareMathOperator{\UT}{UT}
\DeclareMathOperator{\id}{id}
\newcommand{\ch}{\mathop\mathrm{char}}
\DeclareMathOperator{\im}{im}
\DeclareMathOperator{\GL}{GL}
\DeclareMathOperator{\End}{End}
\DeclareMathOperator{\Aut}{Aut}
\DeclareMathOperator{\supp}{supp}
\DeclareMathOperator{\Ann}{Ann}
\DeclareMathOperator{\PIexp}{PIexp}
\DeclareRobustCommand{\No}{\ifmmode{\nfss@text{\textnumero}}\else\textnumero\fi} 
\begin{document}

\title{Graded group actions and generalized $H$-actions compatible with gradings}

\author{A.\,S. Gordienko}
\address{Department of Higher Algebra,
Faculty of Mechanics and  Mathematics,
M.\,V.~Lomonosov Moscow State University,
Leninskiye Gory, d.1,  Main Building, GSP-1, 119991 Moskva, Russia }

\email{alexey.gordienko@math.msu.ru}

\keywords{Associative algebra, polynomial identity, codimension, group grading, group action, generalized $H$-action.}

\begin{abstract} 
We introduce the notion of a graded group action on a graded algebra or, which is the same, a group action by graded pseudoautomorphisms. An algebra with such an action is a natural generalization of an algebra with a super- or a pseudoinvolution.
We study groups of graded pseudoautomorphisms, show that the Jacobson radical of a group graded finite dimensional associative algebra $A$ over a field of characteristic $0$ is stable under graded pseudoautomorphisms, prove the invariant version of the Wedderburn--Artin Theorem and the analog of Amitsur's conjecture for the codimension growth of graded polynomial $G$-identities in such algebras $A$ with a graded action of a group $G$.
\end{abstract}

\subjclass[2020]{Primary 16W22; Secondary 16R10, 16R50, 16T05, 16W20, 16W25, 16W50, 16W55, 17A01.}

\thanks{Partially supported by the Russian Science Foundation grant \No\,22-11-00052.
}

\maketitle

Classifying all varieties of algebras with respect to their polynomial identities seems to be an extremely hard problem since not too many examples of algebras are known where a basis for polynomial identities has been explicitly found~\cite{Bahturin, DrenKurs, ZaiGia}. One of the possible ways to overcome this difficulty is to classify varieties with respect to numeric sequences related to their polynomial identities.
One of such sequences is the sequence of codimensions. 
By the definition, $c_n(A) := \dim \frac{W_n}{W_n\cap \Id(A)}$ where $W_n$ is the vector space of multilinear
polynomials in the non-commuting variables $x_1,\ldots, x_n$ and $\Id(A)$ is the set of polynomial identities of $A$, i.e. those polynomials that vanish under all evaluations in $A$. Codimensions arise naturally e.g. when one calculates a basis for polynomial identities of an algebra over a field of characteristic~$0$. In addition, they are a useful tool to prove that an algebra satisfies a nontrivial polynomial identity~\cite{RegTensor}. Finally, there is a tight relationship between the asymptotic
behaviour of codimensions and the structure of the algebra~\cite[Chapter 6]{ZaiGia}.
 Another important sequence is the sequence of colengths. Colengths appear when one considers the natural action of the $n$th symmetric group $S_n$ on $\frac{W_n}{W_n\cap \Id(A)}$. Colengths provide an essential instrument for proving upper bounds on codimensions~\cite[Proposition 6.2.6]{ZaiGia}.

In 1980's, S.\,A.~Amitsur made a conjecture on the asymptotic behavior of codimensions in associative algebras:

\begin{conjecture}[S.\,A.~Amitsur]
	Let $A$ be an associative p.i. algebra (=algebra satisfying a polynomial identity that holds not in all associative algebras)
	over a field of characteristic $0$. Then there exists $\PIexp(A):=\lim\limits_{n\to\infty} \sqrt[n]{c_n(A)} \in \mathbb Z_+$.
\end{conjecture}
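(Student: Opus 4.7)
The plan is to follow the route pioneered by Giambruno and Zaicev: first reduce the problem to finite-dimensional (super)algebras, and then match an upper and a lower bound on $\sqrt[n]{c_n(A)}$ given by an explicit combinatorial invariant of the Wedderburn--Malcev decomposition. A first step is to invoke Kemer's representability theorem for T-ideals: in characteristic $0$ every associative PI algebra $A$ satisfies $\Id(A) = \Id(E(B))$ for some finite-dimensional associative superalgebra $B$, where $E(B)$ is its Grassmann envelope. Since $c_n$ depends only on $\Id(A)$, this reduces everything to the case in which $A$ is either finite-dimensional or is the Grassmann envelope of such an algebra; extending scalars to the algebraic closure does not affect $\PIexp(A)$.

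For a finite-dimensional $A$, write $A = B_1 \oplus \cdots \oplus B_q \oplus J$ with each $B_i$ a simple matrix algebra and $J = J(A)$ the Jacobson radical, and define
$$
 d(A) := \max\bigl\{\dim B_{i_1} + \cdots + \dim B_{i_r} : B_{i_1} J B_{i_2} J \cdots J B_{i_r} \neq 0,\ i_j \text{ distinct}\bigr\}.
$$
The goal is to establish $\PIexp(A) = d(A) \in \mathbb Z_+$ by matching inequalities. For the upper bound $\limsup \sqrt[n]{c_n(A)} \leq d(A)$, I would decompose $W_n/(W_n\cap \Id(A))$ into $S_n$-isotypic components $\chi^\lambda$, $\lambda \vdash n$, and apply a Regev-style argument: long alternations on semisimple variables collapse via matrix identities (Amitsur--Levitzki, Regev), while a non-identity admits at most $\height(J)-1$ variables evaluated inside the radical. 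These constraints force the multiplicity $m_\lambda$ to vanish outside a hook-shaped region whose arm and leg are controlled by $d(A)$; classical hook-length estimates then yield a bound of the form $\sum_\lambda m_\lambda \dim \chi^\lambda \leq q(n)\, d(A)^n$ with $q$ polynomial in $n$.

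For the lower bound $\liminf \sqrt[n]{c_n(A)} \geq d(A)$, fix an optimal tuple $(i_1,\ldots,i_r)$ and radical elements $w_1,\ldots,w_{r-1} \in J$ witnessing the non-vanishing of $B_{i_1} w_1 B_{i_2} w_2 \cdots w_{r-1} B_{i_r}$. On each block $B_{i_k}$ I would evaluate an appropriate alternating polynomial of Regev type, built from central polynomials for matrix algebras in the sense of Formanek and Razmyslov, and then thread the $w_j$ between these evaluations to produce a multilinear non-identity $f \in W_n$. The $S_n$-submodule generated by $f$ has an isotypic component indexed by a partition whose Young diagram is essentially a union of rectangles of total area $\approx d(A)\cdot n$; the Frobenius/hook-length formula then yields $\dim \chi^\lambda \geq c\, n^{-t}\, d(A)^n$, completing the matching asymptotics.

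The principal obstacle is the lower bound: one must simultaneously exhibit an explicit multilinear polynomial that survives evaluation on a structurally complex algebra and whose associated $S_n$-type has a sufficiently ``wide'' Young diagram. This demands a delicate orchestration of alternations along a radical-connected path traversing several simple blocks, and constitutes the technical heart of the Giambruno--Zaicev program. A secondary, but essential, subtlety is the passage from finite-dimensional algebras to general PI algebras via Grassmann envelopes, where one must verify that the invariant $d(\cdot)$ behaves compatibly with the $\mathbb Z_2$-grading so that the integrality of $\PIexp$ survives.
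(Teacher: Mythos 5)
The paper does not prove this statement at all: it is quoted verbatim as Amitsur's conjecture, and the proof is attributed to Giambruno and Zaicev (\cite[Theorem~6.5.2]{ZaiGia}); everything the present paper actually proves concerns graded $G$-actions and graded $G$-codimensions, for which the ungraded conjecture is an input, not an output. So there is no in-paper argument to compare yours against; the only fair comparison is with the Giambruno--Zaicev proof itself, and your outline is a faithful summary of that route: Kemer representability to reduce to a finite dimensional superalgebra and its Grassmann envelope, the invariant $d(A)$ read off the Wedderburn--Mal'cev decomposition, an upper bound via cocharacter multiplicities confined to a hook-shaped region, and a lower bound via multialternating central-polynomial constructions threaded through the radical.

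That said, as a proof it is a plan rather than an argument, and you have correctly located the two places where it is not yet a proof. First, the lower bound: ``evaluate an appropriate alternating polynomial of Regev type \dots and thread the $w_j$ between these evaluations'' is precisely the step that occupies the bulk of the Giambruno--Zaicev paper; one must actually construct, for each $\mu$, a multilinear polynomial alternating in $\mu$ disjoint sets of $d(A)$ variables each that is not an identity of $A$, and verify the nonvanishing of a specific evaluation along the radical-connected chain $B_{i_1}w_1\cdots w_{r-1}B_{i_r}$. Nothing in your sketch guarantees that such a polynomial exists or survives the evaluation. Second, the Grassmann envelope case is not a ``secondary subtlety'': when $A$ is PI but not representable by a finite dimensional algebra, $E(B)$ is infinite dimensional, the invariant must be replaced by its superalgebra analogue (with the graded simple components and a signed count of odd dimensions), and both bounds have to be redone in the $\mathbb Z/2\mathbb Z$-graded setting. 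Until those two components are supplied, what you have is an accurate road map of \cite{ZaiGia}, not an independent proof.
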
 

The number $\PIexp(A)$ is called the \textit{exponent of the codimension growth of polynomial identities} or the \textit{PI-exponent} of $A$.

Amitsur's conjecture was proved in 1999 by
A.~Giambruno and M.V.~Zaicev~\cite[Theorem~6.5.2]{ZaiGia}. Analogs of Amitsur's conjectures were proved in 2002 by M.V.~Zaicev~\cite{ZaiLie}
for finite dimensional Lie algebras and in 2011 by A.~Giambruno,
I.P.~Shestakov, M.V. Zaicev~\cite{GiaSheZai} for finite dimensional Jordan and alternative
algebras. Moreover, formulas that relate $\PIexp(A)$ with the structure of $A$ were found. The Wedderburn--Mal'cev Theorem, as well as its non-associative analogs, was an important tool in the proofs.

Alongside with ordinary polynomial
identities of algebras, graded polynomial identities, $G$- and
$H$-identities are
important too~\cite{BahtGiaZai, BahturinLinchenko, BahtZaiGraded, BahtZaiGradedExp, BahtZaiSehgal,
	BereleHopf, Linchenko}.
Usually, to describe such identities is easier
than to describe the ordinary ones. Furthermore, the graded polynomial identities, $G$- and
$H$-identities completely determine the ordinary polynomial identities. 
Therefore the question arises whether the conjecture
holds for graded codimensions, $G$- and $H$-codimensions.
The analog of Amitsur's conjecture
for codimensions of graded identities was proved in 2010--2011 by
E.~Aljadeff,  A.~Giambruno, and D.~La~Mattina~\cite{AljaGia, AljaGiaLa, GiaLa}
for all associative PI-algebras graded by a finite group.

In 2013, the analog of the conjecture
was proved~\cite{ASGordienko3} for finite dimensional associative algebras with an action
of a semisimple Hopf algebra. This result was obtained as a consequence of~\cite[Theorem~5]{ASGordienko3}, where the author considered finite dimensional associative algebras with a generalized $H$-action of an associative algebra $H$ with $1$.
In the proof, he required the existence of an $H$-invariant Wedderburn--Mal'cev and Wedderburn--Artin decompositions. The first restriction was removed in~\cite{ASGordienko8}.  In~\cite{ASGordienko15} the author provided sufficient conditions for the conjecture to hold in algebras with a not necessarily $H$-invariant Jacobson radical.

Recently, studies of asymptotic behaviour of the corresponding types of polynomial
identities in finite dimensional associative algebras with super- and pseudoinvolutions
have gained popularity~\cite{AljaGiaKar, CentEstradaIop,
	dosSantos,
GiaIopLaMa1, GiaIopLaMa2, Ioppolo, IopMar}. R.\,B. dos Santos~\cite{dosSantos} was the first to notice that
an algebra with a superinvolution is an algebra with a generalized $H$-action.
In Theorem~\ref{TheoremGradGenActionReplace} 
below we show that the class of
additional structures on algebras that can be reduced to generalized $H$-actions
includes not only superinvolutions and pseudoinvolutions, but also any actions on an algebra compatible with a grading having a finite support.

In order to capture the properties of a $G$-action for a group $G$ that still force the Jacobson radical to be $G$-invariant and make it possible to prove the analog of Amitsur's conjecture for graded polynomial $G$-identities and the analog of the Wedderburn--Artin Theorem, we introduce the notions of a graded pseudoautomorphism of a graded algebra (see Definition~\ref{DefGrPseudoAuto}) and a graded group action (see
Definition~\ref{DefGradedAction}). The latter is precisely a group action by graded pseudoautomorphisms.

In Theorems~\ref{Theorem2x2PseudoTrivial}, \ref{TheoremPseudoGrassmann} and \ref{TheoremPseudoAutoImageTauIsQNilpAlg} we calculate the groups of graded pseudoautomorphisms for several algebras.
In Theorem~\ref{TheoremTGradedGActionInvRad} we show that the Jacobson radical is invariant under graded pseudoautomorphisms. In Theorem~\ref{TheoremTGradedGActionInvWedderburnArtin} we prove the invariant version of the Wedderburn--Artin Theorem. In Theorem~\ref{TheoremTGrGActionAssoc} we show that the analog of Amitsur's conjecture holds for graded polynomial $G$-identities of finite dimensional associative algebras over a field of characteristic $0$ with a graded action of an arbitrary group $G$,
thus generalizing the results from~\cite{dosSantos,
	GiaIopLaMa1, GiaIopLaMa2, Ioppolo, IopMar}. 

\section{Gradings and graded polynomial identities}

Let $A$ be a (not necessarily associative) algebra over a field $\mathbbm{k}$ and let $T$ be a set.
We say that a decomposition $\Gamma \colon A=\bigoplus_{t\in T} A^{(t)}$ is a \textit{$T$-grading} on
the algebra $A$ if for every $s,t \in T$ there exists $r\in T$
such that $A^{(s)}A^{(t)}\subseteq A^{(r)}$. In this case we call $A$
a \textit{$T$-graded} algebra.
If $T$ is a group and $A^{(s)}A^{(t)}\subseteq A^{(st)}$
for every $s,t \in T$, then we say that $\Gamma$ is a \textit{group $T$-grading}.

The set $\supp \Gamma := \lbrace t\in T \mid A^{(t)}\ne 0\rbrace$ is called 
the \textit{support} of $\Gamma$.

Consider the absolutely free non-associative algebra $\mathbbm{k}\lbrace X^{T\text{-}\mathrm{gr}} \rbrace$
on the disjoint union $$X^{T\text{-}\mathrm{gr}}:=\bigsqcup_{t \in T}X^{(t)}$$
of the sets $X^{(t)} = \{ x^{(t)}_1,
x^{(t)}_2, \dots \}$, i.e. the algebra of all non-associative polynomials in variables from $X^{T\text{-}\mathrm{gr}}$.

We say that $f=f\left(x^{(t_1)}_{i_1}, \dots, x^{(t_s)}_{i_s}\right)$ is
a \textit{graded polynomial identity} for
a $T$-graded algebra $A=\bigoplus_{t\in T}
A^{(t)}$
and write $f\equiv 0$
if $f\left(a^{(t_1)}_1, \dots, a^{(t_s)}_s\right)=0$
for all $a^{(t_j)}_j \in A^{(t_j)}$, $1 \leqslant j \leqslant s$.
The set $\Id^{T\text{-}\mathrm{gr}}(A)$ of graded polynomial identities of
$A$ is
an ideal of $\mathbbm{k}\lbrace
X^{T\text{-}\mathrm{gr}}\rbrace$.

\begin{example}\label{ExampleIdGr}
	Consider the $\mathbb{Z}/2\mathbb{Z}$-grading $\UT_2(\mathbbm{k})=\UT_2(\mathbbm{k})^{(0)}\oplus \UT_2(\mathbbm{k})^{(1)}$
	on the algebra $\UT_2(\mathbbm{k})$ of upper triangular $2\times 2$ matrices over a field $\mathbbm{k}$
	defined by
	$\UT_2(\mathbbm{k})^{(0)}=\left(
	\begin{array}{cc}
	\mathbbm{k} & 0 \\
	0 & \mathbbm{k}
	\end{array}
	\right)$ and $\UT_2(\mathbbm{k})^{(1)}=\left(
	\begin{array}{cc}
	0 & \mathbbm{k} \\
	0 & 0
	\end{array}
	\right)$. We have $$\left[x^{(0)}, y^{(0)}\right]:=x^{(0)} y^{(0)} - y^{(0)} x^{(0)}
	\in \Id^{\mathbb{Z}/2\mathbb{Z}\text{-}\mathrm{gr}}(\UT_2(\mathbbm{k}))$$
	and $x^{(1)} y^{(1)} 
	\in \Id^{\mathbb{Z}/2\mathbb{Z}\text{-}\mathrm{gr}}(\UT_2(\mathbbm{k}))$.
\end{example}

Every ungraded algebra $A$ can be graded by the trivial group $\lbrace 1\rbrace$. In this case graded polynomial identities
are just ordinary \textit{polynomial identities} of $A$.

\section{Graded pseudoautomorphisms of graded algebras}

In this section we introduce graded pseudoautomorphisms. Our motivation comes from Examples~\ref{ExampleSuperInvolution}
and~\ref{ExamplePseudoInvolution} below.

\begin{definition}\label{DefGrPseudoAuto}
	Let $\Gamma \colon A=\bigoplus_{t\in T} A^{(t)}$ be a $T$-grading on a
	 (not necessarily associative) algebra $A$ over a field $\mathbbm{k}$ for some set $T$.
We say that a linear bijection $\varphi \colon A \to A$ is a
\textit{pseudoautomorphism} of $\Gamma$
or a \textit{graded pseudoautomorphism} of $A$
if $\varphi \left( A^{(t)} \right) \subseteq A^{(t)}$ for all $t\in T$
and for every $s,t \in T$ there exist $\alpha(s,t), \beta(s,t) \in \mathbbm{k}$
such that
\begin{equation}\label{EqGradedPseudoAutomorphism}
\varphi(ab)=\alpha(s,t)\,\varphi(a)\varphi(b)+\beta(s,t)\,\varphi(b)\varphi(a)
\end{equation}
  for all $s,t\in
   T$, $a\in A^{(s)}$ and $b\in A^{(t)}$.
\end{definition} 
  
  If \eqref{EqGradedPseudoAutomorphism} holds for $\alpha(s,t)=1$,
  $\beta(s,t)=0$ for all $s,t\in T$, then we say that $\varphi$ is an
\textit{automorphism} of $\Gamma$
or a \textit{graded automorphism} of $A$. In other words, $\varphi$ is a graded automorphism
of $A$ if and only if $\varphi$ is an automorphism that preserves the graded components.

Below we give some other important examples of graded pseudoautomorphisms:

\begin{example}\label{ExampleSuperInvolution} Let $A=A^{(0)}\oplus A^{(1)}$ be a $\mathbb Z/2\mathbb Z$-graded algebra. A linear map $\star \colon A \to A$ is a \textit{superinvolution} (see \cite{RacineSuperInvolution}) if
	$\left(A^{(k)}\right)^\star = A^{(k)}$ for all $k=0,1$, $a^{\star\star}=a$ for all $a\in A$,
	$(ab)^\star=(-1)^{k\ell}b^\star a^\star$ for all $a\in A^{(k)}$, $b\in A^{(\ell)}$ where $k,\ell \in \lbrace 0,1\rbrace$. 
\end{example}
\begin{example}\label{ExamplePseudoInvolution}  Let $A=A^{(0)}\oplus A^{(1)}$ be a $\mathbb Z/2\mathbb Z$-graded algebra. A linear map $\star \colon A \to A$ is a \textit{pseudoinvolution} (see \cite{MartinezZelmanov}) if $\left(A^{(k)}\right)^\star = A^{(k)}$ for all $k=0,1$, $a^{\star\star}=(-1)^k a$, $(ab)^\star=(-1)^{k\ell}b^\star a^\star$ for all $a\in A^{(k)}$, $b\in A^{(\ell)}$ where $k,\ell \in \lbrace 0,1\rbrace$. 
\end{example}

Denote by $\widetilde{\Aut}(\Gamma)$ the set of all pseudoautomorphisms of $\Gamma$. 
For each $s,t \in T$ we have the following possibilities:
\begin{enumerate}
	\item\label{CaseGrPseudo1} $A^{(s)}A^{(t)}=A^{(t)}A^{(s)}=0$. In this case we may assume that for every $\varphi\in\widetilde{\Aut}(\Gamma)$ $$\alpha(s,t)=\beta(s,t)=\alpha(t,s)=\beta(t,s)=0.$$
	
	\item\label{CaseGrPseudo2} $A^{(s)}A^{(t)}\ne 0$, $A^{(t)}A^{(s)} = 0$. (If $A^{(s)}A^{(t)}= 0$, $A^{(t)}A^{(s)} \ne 0$, we just swap $s$ and $t$.) In this case  we may assume that $$\beta(s,t)=\alpha(t,s)=\beta(t,s)=0$$
	and $\alpha(s,t)\ne 0$ is defined uniquely. 
	
	\item\label{CaseGrPseudo3}  $s\ne t$, $A^{(s)}A^{(t)}\ne 0$, $A^{(t)}A^{(s)} \ne 0$ and for some $\lambda,\mu \in \mathbbm{k}$, where $\lambda\mu \ne 0$,  the graded polynomial identity
	\begin{equation}\label{EqGradedPseudoAutomorphismIdentity}\lambda\, x^{(s)}y^{(t)}+\mu\, y^{(t)}x^{(s)}\equiv 0\end{equation}
	holds in $A$. (If~\eqref{EqGradedPseudoAutomorphismIdentity} holds for some  $\lambda,\mu \in \mathbbm{k}$ where at least one of $\lambda$ and $\mu$ is nonzero, then both of $\lambda$ and $\mu$ are nonzero, since $A^{(s)}A^{(t)}\ne 0$, $A^{(t)}A^{(s)} \ne 0$.)  In this case $x^{(s)}y^{(t)}$ can be expressed in terms of $y^{(t)}x^{(s)}$ and vice versa. Hence we may assume that $$\beta(s,t)=\beta(t,s)=0$$
	and $\alpha(s,t)\ne 0$, $\alpha(t,s)\ne 0$ are defined uniquely.

	\item\label{CaseGrPseudo4} $s=t$, $A^{(s)}A^{(s)}\ne 0$ and for some $\lambda,\mu \in \mathbbm{k}$, where $\lambda\mu \ne 0$,  the graded polynomial identity
	\eqref{EqGradedPseudoAutomorphismIdentity} holds.   In this case $x^{(s)}y^{(s)}$ can be expressed in terms of $y^{(s)}x^{(s)}$ and vice versa. Hence we may assume that $\beta(s,s)=0$
	and $\alpha(s,s)\ne 0$ is defined uniquely.

\item\label{CaseGrPseudo5} $s\ne t$ and for any $\lambda,\mu \in \mathbbm{k}$, such that the graded polynomial identity
\eqref{EqGradedPseudoAutomorphismIdentity} holds, we have $\lambda=\mu=0$.  In this case the elements $\alpha(s,t)$, $\beta(s,t)$, $\alpha(t,s)$, $\beta(t,s)$ are defined uniquely.

	\item\label{CaseGrPseudo6} $s=t$ and for any $\lambda,\mu \in \mathbbm{k}$, such that the graded polynomial identity
	\eqref{EqGradedPseudoAutomorphismIdentity} holds, we have $\lambda=\mu=0$.  In this case the elements $\alpha(s,s)$ and $\beta(s,s)$ are defined uniquely.
\end{enumerate}	

\begin{proposition}\label{PropositionGrPseudoAutoDet2x2AlphaBetaNonzero} In the cases~\ref{CaseGrPseudo5} and~\ref{CaseGrPseudo6}
	we have \begin{equation}\label{EqDet2x2AlphaBetaNonzero}\left|\begin{smallmatrix}
	\alpha(s,t) & \beta(t,s) \\
	\beta(s,t) & \alpha(t,s)\end{smallmatrix} \right| \ne 0.\end{equation}
\end{proposition}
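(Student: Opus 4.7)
The plan is to argue by contradiction: suppose
$\left|\begin{smallmatrix}\alpha(s,t)&\beta(t,s)\\\beta(s,t)&\alpha(t,s)\end{smallmatrix}\right|=0$
and deduce a graded polynomial identity of the
form~\eqref{EqGradedPseudoAutomorphismIdentity} with a nonzero pair
$(\lambda,\mu)$, contradicting the defining property of
cases~\ref{CaseGrPseudo5} and~\ref{CaseGrPseudo6}.

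The two defining identities of $\varphi$ on the relevant components are
$\varphi(ab)=\alpha(s,t)\,\varphi(a)\varphi(b)+\beta(s,t)\,\varphi(b)\varphi(a)$
and
$\varphi(ba)=\alpha(t,s)\,\varphi(b)\varphi(a)+\beta(t,s)\,\varphi(a)\varphi(b)$
for all $a\in A^{(s)}$, $b\in A^{(t)}$. I would form the linear combination
$\lambda\varphi(ab)+\mu\varphi(ba)$ and read off the coefficients of the two
``monomials'' $\varphi(a)\varphi(b)$ and $\varphi(b)\varphi(a)$: they equal
$\lambda\alpha(s,t)+\mu\beta(t,s)$ and $\lambda\beta(s,t)+\mu\alpha(t,s)$
respectively. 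These are precisely the components of the column vector obtained
by applying the matrix of~\eqref{EqDet2x2AlphaBetaNonzero} to
$(\lambda,\mu)^{\top}$.

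If the determinant vanishes, this homogeneous $2\times 2$ system admits a
nontrivial solution $(\lambda,\mu)\ne(0,0)$, for which the right-hand side of
$\lambda\varphi(ab)+\mu\varphi(ba)$ is identically zero. Hence
$\varphi(\lambda ab+\mu ba)=0$ for all $a\in A^{(s)}$, $b\in A^{(t)}$, and the
injectivity of~$\varphi$ (a consequence of it being a linear bijection) forces
$\lambda ab+\mu ba=0$; this is exactly the graded polynomial
identity~\eqref{EqGradedPseudoAutomorphismIdentity} with a nonzero pair of
scalars. Since cases~\ref{CaseGrPseudo5} and~\ref{CaseGrPseudo6} are
characterised by the non-existence of such a relation, this is the required
contradiction.

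The proof is essentially a piece of $2\times 2$ linear algebra coupled with the
injectivity of~$\varphi$, so I do not anticipate a serious obstacle. The only
place that really demands care is to match the positions of $\beta(s,t)$ and
$\beta(t,s)$ in~\eqref{EqDet2x2AlphaBetaNonzero} against the coefficients read
off from the combined equation, so that the matrix annihilating $(\lambda,\mu)$
is literally the one appearing in the statement. Note the argument applies
uniformly to case~\ref{CaseGrPseudo5} and to case~\ref{CaseGrPseudo6} (where
$s=t$ and the ``two'' identities coincide up to relabelling $a\leftrightarrow b$).
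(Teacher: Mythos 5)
Your proposal is correct and follows essentially the same route as the paper: assume the determinant vanishes, pick a nontrivial $(\lambda,\mu)$ annihilated by the matrix (equivalently, a vanishing linear combination of its columns), conclude $\lambda\varphi(ab)+\mu\varphi(ba)=0$, and use injectivity of $\varphi$ to obtain the forbidden identity~\eqref{EqGradedPseudoAutomorphismIdentity}. The bookkeeping of where $\beta(s,t)$ and $\beta(t,s)$ sit in the matrix matches the paper's system~\eqref{EqGradedPseudoAutomorphismTwo}, so there is nothing to correct.
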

\begin{proof}
	The condition~\eqref{EqGradedPseudoAutomorphism}
	implies
	\begin{equation}\label{EqGradedPseudoAutomorphismTwo}
	\left\lbrace\begin{array}{l}
	\varphi(ab)=\alpha(s,t)\,\varphi(a)\varphi(b)+\beta(s,t)\,\varphi(b)\varphi(a),\\
	\varphi(ba)=\beta(t,s)\,\varphi(a)\varphi(b)+\alpha(t,s)\,\varphi(b)\varphi(a)\\
	\end{array}
	\right.
	\end{equation} for all $a\in A^{(s)}$ and $b\in A^{(t)}$.
	
	Suppose $\left|\begin{smallmatrix}
	\alpha(s,t) & \beta(t,s) \\
	\beta(s,t) & \alpha(t,s)\end{smallmatrix} \right| = 0$.
	Then the columns of the matrix $\left(\begin{smallmatrix}
	\alpha(s,t) & \beta(t,s) \\
	\beta(s,t) & \alpha(t,s)\end{smallmatrix} \right)$ are linearly dependent
	and there exist $\lambda, \mu \in \mathbbm{k}$, either $\lambda\ne 0$ or $\mu \ne 0$, such that
	$$\lambda \left(\begin{smallmatrix}
	\alpha(s,t)  \\
	\beta(s,t) \end{smallmatrix} \right) + \mu \left(\begin{smallmatrix}
	 \beta(t,s) \\
	\alpha(t,s)\end{smallmatrix} \right)= \left(\begin{smallmatrix}
	0 \\
	0 \end{smallmatrix} \right).$$

Adding the first equation of~\eqref{EqGradedPseudoAutomorphismTwo} multiplied by $\lambda$
to the second equation of~\eqref{EqGradedPseudoAutomorphismTwo} multiplied by $\mu$,
we get 
$$\lambda\,\varphi(ab)+\mu\,\varphi(ba)=0 \text{ for all }a\in A^{(s)}\text{ and }b\in A^{(t)}.$$
Taking into account that $\varphi$ is a linear bijection,
we obtain that the graded polynomial identity~\eqref{EqGradedPseudoAutomorphismIdentity}
holds in $A$. However this cannot happen in the cases~\ref{CaseGrPseudo5} and~\ref{CaseGrPseudo6}.
We get a contradiction. Therefore \eqref{EqDet2x2AlphaBetaNonzero} holds.
\end{proof}

For a vector space $V$ over a field $\mathbbm{k}$ denote by $\GL(V)$
the group of all $\mathbbm{k}$-linear invertible operators $V \to V$.

\begin{proposition}\label{PropositionGroupGradedPseudoAutomorphism} Let $\Gamma \colon A=\bigoplus_{t\in T} A^{(t)}$ be a grading on an algebra $A$ over a field $\mathbbm{k}$ by some set $T$.
		Then $\widetilde{\Aut}(\Gamma)$ is a subgroup of $\GL(A)$.\end{proposition}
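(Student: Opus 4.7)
The plan is to verify the three subgroup axioms for $\widetilde{\Aut}(\Gamma)\subseteq\GL(A)$: the identity lies in $\widetilde{\Aut}(\Gamma)$, the set is closed under composition, and it is closed under taking inverses. Linearity and bijectivity of the composite and inverse are automatic from $\GL(A)$, so only preservation of graded components and the defining relation (\ref{EqGradedPseudoAutomorphism}) need to be checked in each case.

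For the identity, the choice $\alpha(s,t)=1$, $\beta(s,t)=0$ makes (\ref{EqGradedPseudoAutomorphism}) hold trivially. For closure under composition, let $\varphi,\psi\in\widetilde{\Aut}(\Gamma)$ with structural scalars $\alpha_\varphi,\beta_\varphi$ and $\alpha_\psi,\beta_\psi$. For $a\in A^{(s)}$ and $b\in A^{(t)}$, I would first apply $\psi$'s identity to $\psi(ab)$ and then apply $\varphi$'s identity to each of the two resulting products. The key observation is that $\psi(a)\psi(b)\in A^{(s)}A^{(t)}$, so $\varphi$ is applied with the pair $(s,t)$, while $\psi(b)\psi(a)\in A^{(t)}A^{(s)}$, so $\varphi$ is applied with the swapped pair $(t,s)$. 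Collecting the four resulting terms produces an expression of the form $(\varphi\psi)(ab)=\alpha'(s,t)(\varphi\psi)(a)(\varphi\psi)(b)+\beta'(s,t)(\varphi\psi)(b)(\varphi\psi)(a)$ with $\alpha',\beta'$ expressed as explicit bilinear combinations of the scalars of $\varphi$ and $\psi$, whence $\varphi\psi\in\widetilde{\Aut}(\Gamma)$.

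For closure under inversion, the first step is to check $\varphi^{-1}(A^{(t)})=A^{(t)}$ for every $t\in T$: given $b\in A^{(t)}$ with preimage $a=\sum_{t'}a_{t'}\in\bigoplus_{t'}A^{(t')}$, the hypotheses $\varphi(A^{(t')})\subseteq A^{(t')}$ and directness of the sum force $\varphi(a_{t'})=0$ for $t'\ne t$, so $a_{t'}=0$ by injectivity and $a\in A^{(t)}$. The main task is then to produce scalars for $\varphi^{-1}$. Setting $a':=\varphi(a)$ and $b':=\varphi(b)$ in (\ref{EqGradedPseudoAutomorphismTwo}) and applying $\varphi^{-1}$, the two equations become the linear system
\[\begin{pmatrix} \alpha(s,t) & \beta(s,t) \\ \beta(t,s) & \alpha(t,s) \end{pmatrix} \begin{pmatrix} \varphi^{-1}(a'b') \\ \varphi^{-1}(b'a') \end{pmatrix} = \begin{pmatrix} \varphi^{-1}(a')\,\varphi^{-1}(b') \\ \varphi^{-1}(b')\,\varphi^{-1}(a') \end{pmatrix}.\]
In cases (\ref{CaseGrPseudo1})--(\ref{CaseGrPseudo4}) the right-hand products are zero or proportional via the graded identity (\ref{EqGradedPseudoAutomorphismIdentity}), and the scalars for $\varphi^{-1}$ read off directly from the normalization conventions listed before Proposition \ref{PropositionGrPseudoAutoDet2x2AlphaBetaNonzero}. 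In cases (\ref{CaseGrPseudo5}) and (\ref{CaseGrPseudo6}), Proposition \ref{PropositionGrPseudoAutoDet2x2AlphaBetaNonzero} ensures that the coefficient matrix has nonzero determinant (equal, up to transpose, to that of (\ref{EqDet2x2AlphaBetaNonzero})), so the system is invertible and yields $\varphi^{-1}(a'b')$ and $\varphi^{-1}(b'a')$ as the required linear combinations of $\varphi^{-1}(a')\varphi^{-1}(b')$ and $\varphi^{-1}(b')\varphi^{-1}(a')$.

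The main obstacle is the inversion step; the case split mirrors the one preceding Proposition \ref{PropositionGrPseudoAutoDet2x2AlphaBetaNonzero}, and invertibility of the $2\times 2$ coefficient matrix in the generic cases (\ref{CaseGrPseudo5}) and (\ref{CaseGrPseudo6}) is precisely what that proposition was established to provide. Once the right scalars are exhibited in each case, the subgroup property follows.
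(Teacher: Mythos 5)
Your proposal is correct and follows essentially the same route as the paper: the identity and closure under composition are handled directly (the paper simply calls these obvious), and the inverse is obtained from exactly the same case split, with the scalars for $\varphi^{-1}$ read off from the normalization conventions in the degenerate cases and from inverting the $2\times 2$ coefficient matrix $\left(\begin{smallmatrix}\alpha(s,t) & \beta(s,t)\\ \beta(t,s) & \alpha(t,s)\end{smallmatrix}\right)$ in the generic ones, where Proposition~\ref{PropositionGrPseudoAutoDet2x2AlphaBetaNonzero} guarantees invertibility. Your additional check that $\varphi^{-1}$ preserves the homogeneous components is a detail the paper leaves implicit, but it does not change the argument.
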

\begin{proof}
 It is obvious that $\widetilde{\Aut}(\Gamma)$
is closed under the composition and contains the identity map. Hence it is sufficient to show that for every
$\varphi \in \widetilde{\Aut}(\Gamma)$ we have $\varphi^{-1} \in \widetilde{\Aut}(\Gamma)$.

We have to show that there exist such $\alpha'(s,t), \beta'(s,t), \alpha'(t,s), \beta'(t,s) \in \mathbbm{k}$
that
\begin{equation*}
\left\lbrace\begin{array}{l}
\varphi^{-1}(ab)=\alpha'(s,t)\,\varphi^{-1}(a)\varphi^{-1}(b)+\beta'(s,t)\,\varphi^{-1}(b)\varphi^{-1}(a),\\
\varphi^{-1}(ba)=\beta'(t,s)\,\varphi^{-1}(a)\varphi^{-1}(b)+\alpha'(t,s)\,\varphi^{-1}(b)\varphi^{-1}(a)\\
\end{array}
\right.
\end{equation*} for all $a\in A^{(s)}$ and $b\in A^{(t)}$.

We treat the cases listed above Proposition~\ref{PropositionGrPseudoAutoDet2x2AlphaBetaNonzero} separately.

In the case~\ref{CaseGrPseudo1} we can take $$\alpha'(s,t)= \beta'(s,t)= \alpha'(t,s)= \beta'(t,s)=0.$$

In the case~\ref{CaseGrPseudo2}  we can take $$\alpha'(s,t)=\frac{1}{\alpha(s,t)},\ \beta'(s,t)=0,\ \alpha'(t,s)=\beta'(t,s)=0.$$

 In the cases~\ref{CaseGrPseudo3}--\ref{CaseGrPseudo6} we can define $\alpha'(s,t), \beta'(s,t), \alpha'(t,s), \beta'(t,s) \in \mathbbm{k}$ by $$\left(\begin{smallmatrix}
\alpha'(s,t) & \beta'(s,t) \\
\beta'(t,s) & \alpha'(t,s)\end{smallmatrix}
\right) := \left(\begin{smallmatrix}
\alpha(s,t) & \beta(s,t) \\
\beta(t,s) & \alpha(t,s)\end{smallmatrix}
\right)^{-1}.$$ (Here we use Proposition~\ref{PropositionGrPseudoAutoDet2x2AlphaBetaNonzero}.)
\end{proof}

 Denote by $Q$ the subgroup
of $\GL_2(\mathbbm{k})$ that consists of all matrices $\left(\begin{smallmatrix}
\alpha & \beta \\
\beta & \alpha
\end{smallmatrix}\right)$ where $\alpha,\beta\in \mathbbm{k}$, $\alpha^2-\beta^2 \ne 0$.
If $\ch \mathbbm k \ne 2$, we have $Q \cong \mathbbm{k}^\times \times \mathbbm{k}^\times$ where $\left(\begin{smallmatrix}
\alpha & \beta \\
\beta & \alpha
\end{smallmatrix}\right) \mapsto (\alpha+\beta, \alpha-\beta)$.

Denote by $T_4$ and $T_6$ the subsets of $T$ that consists of the elements $s\in T$
that correspond to the cases~\ref{CaseGrPseudo4} and~\ref{CaseGrPseudo6}, respectively. Denote by $T_2$, $T_3$ and $T_5$
 the sets of the two-element subsets $\lbrace s,t \rbrace$ of $T$
 that correspond to the cases~\ref{CaseGrPseudo2}, \ref{CaseGrPseudo3} and~\ref{CaseGrPseudo5}, respectively.
 
 Now we define the following group homomorphisms corresponding to the elements of the sets $T_2, \ldots, T_6$.
 
  For every $\lbrace s,t\rbrace \in T_2$ we define
$\tau_{\lbrace s,t\rbrace} \colon \widetilde{\Aut}(\Gamma) \to \mathbbm{k}^\times$ by $\tau_{\lbrace s,t\rbrace}(\varphi):=\alpha(s,t)$. 
 
For every $\lbrace s,t\rbrace \in T_3$ we define
$\tau_{\lbrace s,t\rbrace} \colon \widetilde{\Aut}(\Gamma) \to \mathbbm{k}^\times \times \mathbbm{k}^\times$ by $\tau_{\lbrace s,t\rbrace}(\varphi):=(\alpha(s,t),\alpha(t,s))$. 
 
For every $s \in T_4$ we define
$\tau_s \colon \widetilde{\Aut}(\Gamma) \to \mathbbm{k}^\times$ by $\tau_s(\varphi):=\alpha(s,s)$. 
 
For every $\lbrace s,t\rbrace \in T_5$ we define
$\tau_{\lbrace s,t \rbrace} \colon \widetilde{\Aut}(\Gamma) \to \GL_2(\mathbbm{k})$ by $\tau_{\lbrace s,t\rbrace}(\varphi):=\left(\begin{smallmatrix}
\alpha(s,t) & \beta(t,s) \\
\beta(s,t) & \alpha(t,s)
\end{smallmatrix}\right)$. 
 
For every $s \in T_6$ we define
$\tau_s \colon \widetilde{\Aut}(\Gamma) \to Q$ by $\tau_s(\varphi):=\left(\begin{smallmatrix}
\alpha(s,s) & \beta(s,s) \\
\beta(s,s) & \alpha(s,s)
\end{smallmatrix}\right)$. 

Denote the group $$\prod\limits_{\omega \in T_2} \mathbbm{k}^\times \times
\prod\limits_{\omega \in T_3} \left(\mathbbm{k}^\times \times \mathbbm{k}^\times \right) \times
\prod\limits_{\omega \in T_4} \mathbbm{k}^\times \times
\prod\limits_{\omega \in T_5} \GL_2(\mathbbm{k}) \times
\prod\limits_{\omega \in T_6} Q$$
by $G(\Gamma)$
and define the homomorphism $$\tau \colon \widetilde{\Aut}(\Gamma) \to G(\Gamma)
$$
by  
$\pi_\omega\tau(\varphi)=
\tau_\omega(\varphi)$  for all $j=2,\ldots, 6$ and $\omega\in T_j$
 where  $\pi_\omega$ is the projection from $G(\Gamma)$ onto the $\omega$-component.
We get an exact sequence of groups
$$\xymatrix{
0 \ar[r] & \Aut(\Gamma) \ar[r] & \widetilde{\Aut}(\Gamma) \ar[r]^(0.5)\tau & G(\Gamma)}$$
where $\Aut(\Gamma)$ is the automorphism group of $\Gamma$.

Consider now the case when the algebra $A$ is finite dimensional and the base field $\mathbbm{k}$ is algebraically closed. In this case the sets $T_2, \ldots, T_6$ are finite. Denote by $H(\Gamma)$ the subset
of $\GL(A) \times G(\Gamma)$ that consists of all the elements $\left(\varphi, \bigl(\gamma_\omega \bigr)_{\substack{j=1,\ldots, 6, \\ \omega\in T_j}}\right)$ that satisfy \begin{itemize} \item the polynomial equations~\eqref{EqGradedPseudoAutomorphism}
 where $a$ and $b$ run the bases of $A^{(s)}$ and $A^{(t)}$, respectively;
 \item the conditions $\pi_\omega\tau(\varphi)=
\gamma_\omega$  for all $j=2,\ldots, 6$ and $\omega\in T_j$.
\end{itemize} Then $H(\Gamma)$ is an affine algebraic variety. Proposition~\ref{PropositionGroupGradedPseudoAutomorphism} implies that $H(\Gamma)$ is an affine algebraic group. Moreover, $H(\Gamma)$ maps bijectively on its image 
 $\widetilde{\Aut}(\Gamma)$ in the first component $\GL(A)$. By~\cite[Chapter II, \S 7.4, Proposition B(b)]{HumphreysAlgGr}, $\widetilde{\Aut}(\Gamma)$ is a closed subgroup of $\GL(A)$.
 
 An example of calculation of $\widetilde{\Aut}(\Gamma)$ will be given in Theorem~\ref{TheoremGrPseudoAutoM11k} below.
 
 It turns out that the existence of a graded pseudoautomorphism for given $\alpha$ and $\beta$ imposes certain restrictions on the corresponding graded algebra in terms of graded polynomial identities:
 
 \begin{theorem}\label{TheoremPseudoAutoGrIdentity}
 	Let $\Gamma \colon A = \bigoplus\limits_{g\in G} A^{(g)}$ be a grading on an associative
 	algebra $A$ over a field $\mathbbm{k}$ by a group $G$ and let $\varphi \in \widetilde{\Aut}(\Gamma)$
 	with $\alpha, \beta \colon \supp\Gamma \times \supp\Gamma \to \mathbbm{k}$.
 	Then the following graded polynomial identities hold in $A$: for every $g,h,t \in \supp \Gamma$ we have
 	\begin{equation}\label{EqPseudoAutoGrIdentity}\begin{split}
 	(\alpha(g,h)\alpha(gh,t)-\alpha(h,t)\alpha(g,ht))x^{(g)}y^{(h)}z^{(t)}-\beta(h,t)\alpha(g,ht)x^{(g)}z^{(t)}y^{(h)}
 	\\+ \beta(g,h)\alpha(gh,t)y^{(h)}x^{(g)}z^{(t)}-\alpha(h,t)\beta(g,ht)y^{(h)}z^{(t)}x^{(g)}\\+\alpha(g,h)\beta(gh,t)z^{(t)}x^{(g)}y^{(h)}+(\beta(g,h)\beta(gh,t)-\beta(h,t)\beta(g,ht))z^{(t)}y^{(h)}x^{(g)}\equiv 0.
 	\end{split}\end{equation}
 	In particular, if $\Gamma$ is trivial and $\alpha\ne 0$, $\beta \ne 0$, the ordinary polynomial identity $$[[x_1, x_2], x_3]\equiv 0$$ holds in $A$.	
 \end{theorem}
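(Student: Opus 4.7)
The plan is to apply the defining relation \eqref{EqGradedPseudoAutomorphism} twice to each side of the associativity identity $(ab)c=a(bc)$, taken with $a\in A^{(g)}$, $b\in A^{(h)}$, $c\in A^{(t)}$, and then to compare the two expansions. Since $ab\in A^{(gh)}$, expanding $\varphi((ab)c)$ by first applying \eqref{EqGradedPseudoAutomorphism} to the outer product and then to $\varphi(ab)$ produces the four monomials $\varphi(a)\varphi(b)\varphi(c)$, $\varphi(b)\varphi(a)\varphi(c)$, $\varphi(c)\varphi(a)\varphi(b)$, $\varphi(c)\varphi(b)\varphi(a)$ with coefficients $\alpha(g,h)\alpha(gh,t)$, $\beta(g,h)\alpha(gh,t)$, $\alpha(g,h)\beta(gh,t)$, $\beta(g,h)\beta(gh,t)$, respectively. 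Symmetrically, since $bc\in A^{(ht)}$, expanding $\varphi(a(bc))$ yields $\varphi(a)\varphi(b)\varphi(c)$, $\varphi(a)\varphi(c)\varphi(b)$, $\varphi(b)\varphi(c)\varphi(a)$, $\varphi(c)\varphi(b)\varphi(a)$ with coefficients $\alpha(h,t)\alpha(g,ht)$, $\beta(h,t)\alpha(g,ht)$, $\alpha(h,t)\beta(g,ht)$, $\beta(h,t)\beta(g,ht)$, respectively. Subtracting the two expressions (they are equal by associativity) collects all six permutations of $\varphi(a)\varphi(b)\varphi(c)$ with exactly the coefficients appearing in \eqref{EqPseudoAutoGrIdentity}.

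To upgrade this relation to a graded polynomial identity on $A$, I would next observe that $\varphi$ in fact restricts to a bijection of every homogeneous component $A^{(s)}$: indeed, if $y\in A^{(s)}$ and $x=\sum_r x_r$ is the homogeneous decomposition of $\varphi^{-1}(y)$, then $\sum_r\varphi(x_r)=y\in A^{(s)}$ together with the uniqueness of the decomposition forces $\varphi(x_r)=0$ for $r\ne s$, and injectivity of $\varphi$ then gives $x_r=0$ for $r\ne s$. Therefore, as $a$, $b$, $c$ range over $A^{(g)}$, $A^{(h)}$, $A^{(t)}$, the triples $(\varphi(a),\varphi(b),\varphi(c))$ range over the whole Cartesian product $A^{(g)}\times A^{(h)}\times A^{(t)}$, and \eqref{EqPseudoAutoGrIdentity} follows as a genuine identity in free variables.

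For the last assertion, I would specialize to the trivial grading, where $\alpha(\cdot,\cdot)$ and $\beta(\cdot,\cdot)$ collapse to single scalars $\alpha$ and $\beta$. The coefficients of $xyz$ and $zyx$ in \eqref{EqPseudoAutoGrIdentity} become $\alpha^2-\alpha^2$ and $\beta^2-\beta^2$ and so vanish, while the remaining four become $\pm\alpha\beta$. Since $\alpha\beta\ne 0$, dividing yields $yxz-xzy+zxy-yzx\equiv 0$, which rearranges as $y[x,z]\equiv [x,z]y$, i.e.\ $[[x,z],y]\equiv 0$; after relabeling this is precisely $[[x_1,x_2],x_3]\equiv 0$.

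I do not anticipate any deep obstacle; the whole argument is essentially careful bookkeeping of $\varphi$ applied to the associator. The only point that requires a moment's thought is the surjectivity of $\varphi$ on each homogeneous component, without which one would obtain the relation only on the images $\varphi(A^{(g)})$, $\varphi(A^{(h)})$, $\varphi(A^{(t)})$ rather than on the components themselves.
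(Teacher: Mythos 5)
Your proposal is correct and follows essentially the same route as the paper: expand $\varphi((ab)c)$ and $\varphi(a(bc))$ via the defining relation \eqref{EqGradedPseudoAutomorphism} and subtract, then specialize to the trivial grading where the surviving terms give $\alpha\beta\,[[z,x],y]\equiv 0$. Your explicit justification that $\varphi$ restricts to a bijection of each homogeneous component (so that $\varphi(a),\varphi(b),\varphi(c)$ range over all of $A^{(g)}\times A^{(h)}\times A^{(t)}$) is a point the paper leaves implicit, and it is a welcome bit of extra care rather than a deviation.
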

 \begin{proof} Let $a\in A^{(g)}$, $b\in A^{(h)}$, $c\in A^{(t)}$.
 	Then \begin{equation}\begin{split}\label{EqPseudoAutoGrIdentity1}
 	\varphi\bigl((ab)c\bigr) = \alpha(gh,t)\varphi(ab)\varphi(c)+\beta(gh,t)\varphi(c)\varphi(ab)\\
 	=  \alpha(g,h)\alpha(gh,t)\varphi(a)\varphi(b)\varphi(c)+
 	\beta(g,h)\alpha(gh,t)\varphi(b)\varphi(a)\varphi(c)\\
 	+\alpha(g,h)\beta(gh,t)\varphi(c)\varphi(a)\varphi(b)
 	+\beta(g,h)\beta(gh,t)\varphi(c)\varphi(b)\varphi(a)
 	\end{split}
 	\end{equation}
 	At the same time,
 	\begin{equation}\begin{split}\label{EqPseudoAutoGrIdentity2}
 	\varphi\bigl(a(bc)\bigr) = \alpha(g,ht)\varphi(a)\varphi(bc)+\beta(g,ht)\varphi(bc)\varphi(a) 
 	\\ = \alpha(h,t) \alpha(g,ht)\varphi(a)\varphi(b) \varphi(c) + \beta(h,t) \alpha(g,ht)\varphi(a) \varphi(c)\varphi(b)
 	\\ + \alpha(h,t) \beta(g,ht)\varphi(b)\varphi(c)\varphi(a) + \beta(h,t) \beta(g,ht)\varphi(c)\varphi(b)\varphi(a).
 	\end{split}
 	\end{equation}
 	Subtracting~\eqref{EqPseudoAutoGrIdentity2} from~\eqref{EqPseudoAutoGrIdentity1},
 	we get~\eqref{EqPseudoAutoGrIdentity}.
 	
 	When $\Gamma$ is trivial, $g=h=t=1$ and~\eqref{EqPseudoAutoGrIdentity} becomes
 	$$\alpha(1,1)\beta(1,1) [[z^{(1)}, x^{(1)}], y^{(1)}] \equiv 0,$$ which implies
 	that the ordinary polynomial identity $[[x_1, x_2], x_3]\equiv 0$ holds in $A$.	
 \end{proof}

 \section{Pseudoautomorphisms of trivially graded algebras}\label{SectionUngradedPseudoAuto}
 
 Note that if $\varphi \in \widetilde{\Aut}(\Gamma)$ for some group grading $\Gamma \colon A = \bigoplus\limits_{g\in G} A^{(g)}$, then $\varphi\bigr|_{A^{(1)}}$ is the pseudoautomorphism of the trivially graded algebra $A^{(1)}$ where $1$ is the identity element in~$G$. Therefore, in order to understand the case of nontrivial gradings,
 it is useful to consider first pseudoautomorphisms of trivially graded algebras. 
 In this section we treat the case of a trivial grading in detail. 
  
 Let $A$ be an algebra over a field $\mathbbm{k}$.
 We say that an operator $\varphi \in \GL(A)$
 is a \textit{pseudoautomorphism} of $A$ if there exist $\alpha, \beta \in \mathbbm{k}$
 such that \begin{equation}\label{EqVarphiAB}\varphi(ab)=\alpha\, \varphi(a)\varphi(b)+\beta \,\varphi(b)\varphi(a)
 \text{ for all }a,b\in A.\end{equation}
 Denote the group of all pseudoautomorphisms of $A$ by $\widetilde{\Aut}(A)$.
 
 Suppose now that $A$ is associative and the identity $yx\equiv \gamma\, xy$ holds in $A$ for some $\gamma\in \mathbbm{k}$.
 Then \begin{equation*}\begin{split}\gamma\, x(yz) \equiv (yz)x \equiv y(zx) \equiv \gamma\, y(xz) \equiv \gamma\, (yx)z  \equiv
 		\gamma^2 (xy)z.\end{split}\end{equation*}
 Hence either $A^3=0$ or $\gamma = \gamma^2$, i.e. $\gamma = 1$, and $A$ is commutative.
 In the case $A^3=0$ but $A^2\ne 0$ we get $yx\equiv \gamma xy \equiv \gamma^2 yx$,
 $\gamma = \pm 1$, and $A$ is either commutative or anti-commutative.
 
 \begin{remark}\label{RemarkCommAnticomm}
 	If $A$ is either commutative or anti-commutative, we can always assume that
 	$\beta = 0$. In this case $\alpha \varphi$ is an automorphism of $A$.
 	Therefore, for an (anti-)commutative algebra~$A$ with a nonzero multiplication the set $\widetilde{\Aut}(A)$ is a subgroup of $\GL(A)$
 	isomorphic to $\Aut(A)\times \mathbbm{k}^\times$.
 \end{remark}
 
 Note that if $\alpha = \pm \beta$ for some $\varphi \in \widetilde{\Aut}(A)$, then $\varphi(ab)= \pm \varphi(ba)$
 for all $a,b \in A$, i.e. the algebra $A$ is either commutative or anti-commutative. 
 
 If the algebra $A$ is neither commutative, nor anti-commutative, 
 the elements $\alpha$ and $\beta$ are uniquely determined for each $\varphi \in \widetilde{\Aut}(A)$. In this case~\eqref{EqVarphiAB}
 applied for both $\varphi(ab)$ and $\varphi(ba)$
 yields \begin{equation}\label{EqVarphiABInverse}\varphi(a)\varphi(b)=\frac{\alpha}{\alpha^2-\beta^2} \varphi(ab)- \frac{\beta}{\alpha^2-\beta^2} \varphi(ba)
 \text{ for all }a,b\in A.\end{equation}
 
 Assign to each $\varphi \in \widetilde{\Aut}(A)$ the matrix $\tau(\varphi):=\left(\begin{smallmatrix}
 \alpha & \beta \\
 \beta  & \alpha
 \end{smallmatrix}\right)$. Again we get an exact sequence of groups
 
 $$0 \to \Aut(A) \to \widetilde{\Aut}(A) \xrightarrow{\tau} \left\lbrace
 \left(\begin{smallmatrix}
 \alpha & \beta \\
 \beta  & \alpha
 \end{smallmatrix}\right)
 \mathrel{\bigl|} \alpha^2-\beta^2 \ne 0 \right\rbrace$$
 
 Below we show that for some algebras $A$ we have  $$\im\tau = \left\lbrace
 \left(\begin{smallmatrix}
 \alpha & \beta \\
 \beta  & \alpha
 \end{smallmatrix}\right)
 \mathrel{\bigl|} \alpha^2-\beta^2 \ne 0 \right\rbrace,$$ while for other algebras
 we have $$\im\tau = \left\lbrace
 \left(\begin{smallmatrix}
 \alpha & \beta \\
 \beta  & \alpha
 \end{smallmatrix}\right)
 \mathrel{\bigl|} \alpha^2-\beta^2 \ne 0, \alpha\beta=0 \right\rbrace,$$
 i.e. all pseudoautomorphisms are just scalar multiples of automorphisms and anti-automorphisms.
 
 \begin{proposition}\label{PropositionUnitPseudoAuto} Let $A$ be a unital non-commutative algebra, $\varphi \in  \widetilde{\Aut}(A)$
 	and $\alpha, \beta \in \mathbbm{k}$ such that~\eqref{EqVarphiAB} holds.
 	Then $\varphi(1_A)=\frac{1_A}{\alpha+\beta}$.
 \end{proposition}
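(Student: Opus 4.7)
The plan is to exploit the defining equation~\eqref{EqVarphiAB} by plugging in the unit element on one side, and then use surjectivity of $\varphi$ to transfer the resulting relation to an identity that holds on all of $A$. Write $u := \varphi(1_A)$.

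First I would substitute $a = 1_A$ in~\eqref{EqVarphiAB}. For every $b \in A$ this yields
\[
\varphi(b) \;=\; \alpha\, u\, \varphi(b) + \beta\, \varphi(b)\, u.
\]
Since $\varphi$ is a bijection of $A$, as $b$ ranges over $A$ the element $\varphi(b)$ ranges over all of $A$, so the relation $c = \alpha u c + \beta c u$ holds for every $c \in A$. The decisive move is then to specialize this to $c = 1_A$, which immediately gives $1_A = (\alpha + \beta)\, u$.

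To conclude that $u = \frac{1_A}{\alpha + \beta}$ I still need $\alpha + \beta \ne 0$. This follows at once from $A \ne 0$: the equality $(\alpha+\beta) u = 1_A$ would otherwise force $1_A = 0$. (Alternatively one may substitute both $a=b=1_A$ into~\eqref{EqVarphiAB} to get $u = (\alpha+\beta) u^2$, and then use that $u \ne 0$ since $\varphi$ is bijective.)

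There is essentially no obstacle here; the only thing worth flagging is the role of the hypothesis that $A$ is non-commutative. It is used only in the background: by Remark~\ref{RemarkCommAnticomm} and the discussion preceding it, when $A$ is neither commutative nor anti-commutative the pair $(\alpha,\beta)$ associated to $\varphi$ is uniquely determined, so the formula $\varphi(1_A) = \frac{1_A}{\alpha+\beta}$ refers to unambiguous scalars. The computation itself, however, gives the conclusion for any scalars $\alpha,\beta$ satisfying~\eqref{EqVarphiAB}, regardless of commutativity.
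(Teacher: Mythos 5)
Your proof is correct, and it is a genuinely more economical route than the one in the paper. The paper also starts from $\varphi(1_A\cdot a)$, but it additionally writes out $\varphi(a\cdot 1_A)$, subtracts the two instances of~\eqref{EqVarphiAB} to obtain $(\alpha-\beta)\bigl(\varphi(1_A)\varphi(a)-\varphi(a)\varphi(1_A)\bigr)=0$, invokes non-commutativity to rule out $\alpha=\beta$ and thereby shows that $\varphi(1_A)$ commutes with every $\varphi(a)$; only then can it collapse $\varphi(a)=(\alpha+\beta)\varphi(1_A)\varphi(a)$ and finish by choosing $a$ with $\varphi(a)=1_A$. You sidestep the entire commutator step: after universalizing $c=\alpha uc+\beta cu$ via surjectivity, the specialization $c=1_A$ makes the two terms $uc$ and $cu$ coincide automatically, so no information about $\alpha-\beta$ or about $u$ being central is needed. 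The price of the paper's longer route is repaid with a by-product (centrality of $\varphi(1_A)$) that your argument does not deliver, but that by-product is not part of the stated conclusion. Your handling of $\alpha+\beta\ne 0$ (from $1_A\ne 0$) and your remark that non-commutativity enters only to make the pair $(\alpha,\beta)$ unambiguous are both accurate.
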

 \begin{proof}
 	Note that for all $a\in A$
 	we have
 	\begin{equation}\label{EqVarphi1a1}\varphi(a)=\varphi(1_A\cdot a)=\alpha \varphi(1_A)\varphi(a)+\beta \varphi(a)\varphi(1_A)
 	\end{equation}
 	and
 	\begin{equation}\label{EqVarphi1a2}\varphi(a)=\varphi(a\cdot 1_A)=\alpha \varphi(a)\varphi(1_A)+\beta \varphi(1_A)\varphi(a).\end{equation}
 	Subtracting~\eqref{EqVarphi1a2} from~\eqref{EqVarphi1a1}, we get
 	$$(\alpha-\beta)(\varphi(1_A)\varphi(a)-\varphi(a)\varphi(1_A))=0.$$
 	As we have already mentioned above, the condition $\alpha = \beta$
 	implies the commutativity of $A$. Hence we may assume that $\alpha \ne \beta$
 	and $\varphi(1_A)\varphi(a)=\varphi(a)\varphi(1_A)$
 	for all $a\in A$. In particular,
 	\eqref{EqVarphi1a1} implies
 	\begin{equation*}\varphi(a)=(\alpha+\beta) \varphi(1_A)\varphi(a)= (\alpha+\beta) \varphi(a)\varphi(1_A).
 	\end{equation*}
 	Since $\varphi$ is invertible, $1_A=\varphi(a)$ for some $a\in A$.
 	Therefore, we get $1_A=(\alpha+\beta)\varphi(1_A)$,
 	and $\varphi(1_A)=\frac{1_A}{\alpha+\beta} $.
 \end{proof}
 
  It turns out that for a wide class of algebras pseudoautomorphisms are just
 scalar multiples of automorphisms and anti-automorphisms:

 \begin{theorem}\label{Theorem2x2PseudoTrivial}
 	If an algebra $A$ over a field $\mathbbm{k}$ contains a subalgebra isomorphic to 
 	the algebra $\UT_2(\mathbbm{k})$ of upper triangular $2\times 2$ matrices,
 	then all its ungraded pseudoautomorphisms
 	are just scalar multiples of their automorphisms and anti-automorphisms, i.e. $$\widetilde{\Aut}(A)\cong \Aut^*(A)\times \mathbbm{k}^\times$$ where $\Aut^*(A)$ is the group of automorphisms and anti-automorphisms of $A$.
 \end{theorem}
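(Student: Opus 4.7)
The plan is to combine Theorem~\ref{TheoremPseudoAutoGrIdentity} with the concrete fact that $\UT_2(\mathbbm{k})$ violates the identity $[[x_1,x_2],x_3]\equiv 0$; indeed, $[[e_{11},e_{12}],e_{11}]=-e_{12}\ne 0$ in every characteristic. Concretely, I would fix an arbitrary $\varphi\in\widetilde{\Aut}(A)$ with associated scalars $\alpha,\beta\in\mathbbm{k}$ satisfying~\eqref{EqVarphiAB}, and first prove that $\alpha\beta=0$. By the last assertion of Theorem~\ref{TheoremPseudoAutoGrIdentity}, the assumption $\alpha\beta\ne 0$ would force $[[x_1,x_2],x_3]\equiv 0$ to hold in $A$; but this identity fails already in the subalgebra $\UT_2(\mathbbm{k})\subseteq A$, yielding the desired contradiction.

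Once $\alpha\beta=0$ is established, a direct inspection of~\eqref{EqVarphiAB} produces the surjectivity half of the theorem: if $\beta=0$, then $\alpha\varphi$ is an automorphism of $A$, while if $\alpha=0$, then $\beta\varphi$ is an anti-automorphism. Hence every pseudoautomorphism is a nonzero scalar multiple of an element of $\Aut^*(A)$, so the multiplication map $\Phi\colon \Aut^*(A)\times\mathbbm{k}^\times\to\widetilde{\Aut}(A)$, $(\psi,c)\mapsto c\psi$, is surjective. Its multiplicativity is immediate, since scalars commute with compositions of linear maps.

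For injectivity I would use that $A$ is neither commutative nor anti-commutative, a property inherited from $\UT_2(\mathbbm{k})$: the elements $e_{11}, e_{12}$ witness both $e_{11}e_{12}-e_{12}e_{11}=e_{12}\ne 0$ and $e_{11}e_{12}+e_{12}e_{11}=e_{12}\ne 0$. The discussion preceding Proposition~\ref{PropositionUnitPseudoAuto} therefore applies, and the pair $(\alpha,\beta)$ attached to each $\varphi$ is uniquely determined. An equality $c\psi=c'\psi'$ in $\widetilde{\Aut}(A)$ then pins down both scalars: $\psi$ and $\psi'$ must be of the same type (the automorphism and anti-automorphism cohorts are disjoint because $\psi$ being simultaneously both would force commutativity of $A$), and from the recovered scalars one reads off $c=c'$ and $\psi=\psi'$. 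The main, and really only, obstacle is this last bit of bookkeeping around the uniqueness of $(\alpha,\beta)$; the identity-theoretic heart of the argument is essentially immediate from Theorem~\ref{TheoremPseudoAutoGrIdentity}.
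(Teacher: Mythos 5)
Your proposal is correct and follows exactly the paper's route: the paper's proof is precisely the observation that a triple commutator of matrix units in $\UT_2(\mathbbm{k})$ is nonzero, so Theorem~\ref{TheoremPseudoAutoGrIdentity} forces $\alpha\beta=0$ for every pseudoautomorphism. The additional bookkeeping you supply (surjectivity and injectivity of $(\psi,c)\mapsto c\psi$, using that $A$ is neither commutative nor anti-commutative so $(\alpha,\beta)$ is unique) is sound and is exactly what the paper leaves implicit.
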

 \begin{proof}
 	It is sufficient to notice that for the matrix units $e_{11}, e_{12}, e_{22}$
 	we have $[[e_{11}, e_{12}], e_{22}] = e_{12}\ne 0$
 	and apply Theorem~\ref{TheoremPseudoAutoGrIdentity}.
 \end{proof}

  Below we denote by $N \leftthreetimes H$ the semi-direct product of groups $N$ and $H$ where $N$ is normal. 
\begin{corollary}
 	All ungraded pseudoautomorphisms of the full matrix algebra $M_n(\mathbbm{k})$
 	as well as the algebra $\UT_n(\mathbbm{k})$ of upper triangular $n\times n$ matrices
 	are just scalar multiples of their automorphisms and anti-automorphisms. Moreover, for $n\geqslant 2$
 	$$\widetilde{\Aut}(M_n(\mathbbm{k}))\cong \Aut^*(M_n(\mathbbm{k}))\times \mathbbm{k}^\times
 	\cong  \bigl(\mathrm{PGL}_n(\mathbbm{k}) \leftthreetimes C_2\bigr) \times \mathbbm{k}^\times$$
 	where  the cyclic group $C_2$ of order $2$ is generated by the operator of transposition $(-)^T$.
 \end{corollary}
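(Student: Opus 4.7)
The plan is to reduce both parts of the corollary to Theorem~\ref{Theorem2x2PseudoTrivial} by exhibiting an explicit embedding of $\UT_2(\mathbbm{k})$, and then to identify $\Aut^*(M_n(\mathbbm{k}))$ via the Skolem--Noether theorem.

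First, I would dispose of the trivial case $n=1$: both $M_1(\mathbbm{k})$ and $\UT_1(\mathbbm{k})$ equal $\mathbbm{k}$, which is commutative, so Remark~\ref{RemarkCommAnticomm} immediately gives the conclusion and the isomorphism collapses to $\Aut(\mathbbm{k})\times \mathbbm{k}^\times$. For $n\geqslant 2$, the span $\mathbbm{k}e_{11}\oplus \mathbbm{k}e_{12}\oplus \mathbbm{k}e_{22}$ of the first three matrix units is a subalgebra of both $M_n(\mathbbm{k})$ and $\UT_n(\mathbbm{k})$ isomorphic to $\UT_2(\mathbbm{k})$. Applying Theorem~\ref{Theorem2x2PseudoTrivial} to either algebra yields at once that every ungraded pseudoautomorphism is a scalar multiple of an automorphism or an anti-automorphism, together with the isomorphism
$$\widetilde{\Aut}(M_n(\mathbbm{k}))\cong \Aut^*(M_n(\mathbbm{k}))\times \mathbbm{k}^\times.$$

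To pin down $\Aut^*(M_n(\mathbbm{k}))$ for $n\geqslant 2$, I would invoke the Skolem--Noether theorem, which identifies $\Aut(M_n(\mathbbm{k}))$ with $\PGL_n(\mathbbm{k})$ via inner conjugations. The transposition $(-)^T$ is an anti-automorphism of order $2$, and composing an arbitrary anti-automorphism with $(-)^T$ produces an automorphism, so $\Aut(M_n(\mathbbm{k}))$ is normal of index $2$ inside $\Aut^*(M_n(\mathbbm{k}))$, sitting in a short exact sequence
$$1\to \PGL_n(\mathbbm{k})\to \Aut^*(M_n(\mathbbm{k}))\to C_2 \to 1.$$
The inclusion $\langle (-)^T\rangle\cong C_2\hookrightarrow \Aut^*(M_n(\mathbbm{k}))$ provides a section of this sequence, so the extension splits and yields the desired semidirect product $\Aut^*(M_n(\mathbbm{k}))\cong \PGL_n(\mathbbm{k})\leftthreetimes C_2$.

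There is essentially no serious obstacle here: the argument is just the concatenation of Theorem~\ref{Theorem2x2PseudoTrivial} with the classical Skolem--Noether description of $\Aut(M_n(\mathbbm{k}))$. The only sanity check worth spelling out is that for $n\geqslant 2$ the transposition really represents the nontrivial coset modulo inner automorphisms; otherwise some invertible $g$ would satisfy $g e_{ij} g^{-1} = e_{ji}$ for all $i,j$, which is immediately ruled out by comparing the images of $e_{11}$ and $e_{22}$ under conjugation by $g$ with the requirement $g e_{12} g^{-1}=e_{21}$.
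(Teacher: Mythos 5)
Your proposal is correct and follows the same route as the paper: the $n=1$ case is dismissed as trivial, and for $n\geqslant 2$ the span of $e_{11}, e_{12}, e_{22}$ gives a copy of $\UT_2(\mathbbm{k})$ so that Theorem~\ref{Theorem2x2PseudoTrivial} applies directly. The paper leaves the identification $\Aut^*(M_n(\mathbbm{k}))\cong \PGL_n(\mathbbm{k})\leftthreetimes C_2$ implicit as a standard consequence of Skolem--Noether, which you correctly spell out.
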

 \begin{proof}
 	The case $n=1$ is trivial.
 	The case $n\geqslant 2$ follows from Theorem~\ref{Theorem2x2PseudoTrivial}.
 \end{proof}
 
 Recall~\cite{KrakReg} that $[[x_1, x_2], x_3]\equiv 0$ generates the polynomial identities for the infinitely generated Grassmann (or exterior) algebra over a field $\mathbbm{k}$, $\ch \mathbbm{k} = 0$. Now we give a description for the group of its ungraded pseudoautomorphisms:
 
 \begin{theorem}\label{TheoremPseudoGrassmann} Let $E$ be the ungraded unital Grassmann algebra over a field $\mathbbm{k}$ with generators $e_k$ where $k\in \mathbb N$
 	and $\ch \mathbbm{k} \ne 2$.
 	Then $$\widetilde{\Aut}(E)\cong \Aut(E) \leftthreetimes (\mathbbm{k}^\times \times \mathbbm{k}^\times)$$
 	where the group $\mathbbm{k}^\times \times \mathbbm{k}^\times \cong Q = \left\lbrace \left( \left. \begin{smallmatrix} \alpha & \beta \\ \beta & \alpha \end{smallmatrix}\right) \right| \alpha, \beta\in \mathbbm{k},\ \alpha^2-\beta^2\ne 0 \right\rbrace$
 	is acting on $E$ in the following way: 
 	$$ \left(  \begin{smallmatrix} \alpha & \beta \\ \beta & \alpha \end{smallmatrix}\right)(e_{i_1} \ldots e_{i_k})= \left(\frac{\alpha-\beta}{\alpha+\beta}\right)^{\left[\frac{k}{2}\right]}(\alpha+\beta)^{k-1}
 	\,e_{i_1} \ldots e_{i_k}$$
 	and $\tau\left(  \begin{smallmatrix} \alpha & \beta \\ \beta & \alpha \end{smallmatrix}\right) = \left(  \begin{smallmatrix} \alpha & \beta \\ \beta & \alpha \end{smallmatrix}\right)$. 
 \end{theorem}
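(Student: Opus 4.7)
The plan is to produce, for each matrix $q = \left(\begin{smallmatrix}\alpha & \beta \\ \beta & \alpha\end{smallmatrix}\right) \in Q$, an explicit pseudoautomorphism $\varphi_{\alpha,\beta} \in \widetilde{\Aut}(E)$ with $\tau(\varphi_{\alpha,\beta}) = q$, and to show that the resulting map $s \colon Q \to \widetilde{\Aut}(E)$ is a group homomorphism. Combined with the exact sequence
$$0 \to \Aut(E) \to \widetilde{\Aut}(E) \xrightarrow{\tau} Q$$
from Section~\ref{SectionUngradedPseudoAuto} (applicable since $E$ is neither commutative nor anti-commutative), this realises $\widetilde{\Aut}(E)$ as a split extension $\Aut(E) \leftthreetimes Q$, and the isomorphism $Q \cong \mathbbm{k}^\times \times \mathbbm{k}^\times$ (valid since $\ch \mathbbm{k} \ne 2$) gives the stated form.

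I define $\varphi_{\alpha,\beta}$ by the formula in the statement, i.e.\ as the linear operator multiplying each Grassmann monomial $e_{i_1}\ldots e_{i_k}$ by the scalar
$$c_k := \left(\tfrac{\alpha-\beta}{\alpha+\beta}\right)^{\left[\frac{k}{2}\right]}(\alpha+\beta)^{k-1}.$$
Since $\alpha^2-\beta^2 \ne 0$, every $c_k$ is nonzero, so $\varphi_{\alpha,\beta}$ is invertible on each homogeneous component of $E$, and hence on $E$. The heart of the argument is verifying the pseudoautomorphism relation~\eqref{EqVarphiAB}; by linearity it suffices to test it on pairs of monomials $a = e_I$, $b = e_J$. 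When $I \cap J \ne \emptyset$ both sides vanish, and for disjoint $I, J$ of sizes $k, l$ the identity $e_J e_I = (-1)^{kl}\, e_I e_J$ reduces~\eqref{EqVarphiAB} to the scalar relation
$$c_{k+l} = c_k c_l\bigl(\alpha + (-1)^{kl}\beta\bigr).$$
Writing $u = \alpha+\beta$ and $\gamma = (\alpha-\beta)/(\alpha+\beta)$, and noting that $\left[\tfrac{k+l}{2}\right] - \left[\tfrac{k}{2}\right] - \left[\tfrac{l}{2}\right]$ equals $0$ unless both $k$ and $l$ are odd, in which case it equals $1$, a short parity analysis together with the identity $\gamma u = \alpha-\beta$ establishes the relation in all four cases.

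Finally, I verify that $s$ is a homomorphism. Under the matrix product in $Q$ one has $u'' = uu'$ and $\gamma'' = \gamma\gamma'$, so $c_k(\alpha'',\beta'') = c_k(\alpha,\beta)\,c_k(\alpha',\beta')$ on every degree $k$, whence $\varphi_{\alpha'',\beta''} = \varphi_{\alpha,\beta} \circ \varphi_{\alpha',\beta'}$. Since $\tau \circ s = \id_Q$ by construction and $\ker\tau = \Aut(E)$ by the earlier exact sequence, the splitting yields $\widetilde{\Aut}(E) \cong \Aut(E) \leftthreetimes Q$, with $Q$ acting on $\Aut(E)$ by conjugation through $s$; the corresponding action of $Q$ on $E$ itself is precisely the scalar action described in the statement. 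The main obstacle will be the scalar relation $c_{k+l} = c_k c_l(\alpha + (-1)^{kl}\beta)$: the particular exponent $\left[\tfrac{k}{2}\right]$ is engineered so that the four parity cases for $(k, l)$ collapse to a single formula matching the alternating sign $(-1)^{kl}$ of the Grassmann algebra, and this interplay is what makes the section $s$ well-defined and multiplicative.
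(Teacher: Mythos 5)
Your proposal is correct and follows the same route as the paper: the paper's proof consists of exactly the two steps you describe (an "explicit verification" that the given scalar operators are pseudoautomorphisms with the stated $\alpha,\beta$, followed by the observation that this splits the exact sequence $0\to\Aut(E)\to\widetilde{\Aut}(E)\xrightarrow{\tau}Q\to 0$), except that the paper leaves the verification unwritten while you carry it out, correctly reducing it to the scalar identity $c_{k+l}=c_kc_l\bigl(\alpha+(-1)^{kl}\beta\bigr)$ and the multiplicativity $u''=uu'$, $\gamma''=\gamma\gamma'$.
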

\begin{proof}
	An explicit verification shows that $ \left(  \begin{smallmatrix} \alpha & \beta \\ \beta & \alpha \end{smallmatrix}\right)$ is indeed acting on $E$ by a pseudoautomorphism with given $\alpha$ and $\beta$. Hence
	we get a split exact sequence
	$$0 \to \Aut(A) \to \widetilde{\Aut}(A) \xrightarrow{\tau} Q \to 0$$
\end{proof}	

Another example of an algebra $A$ with $\tau\left(\widetilde{\Aut}(A)\right)=\left\lbrace
\left(\begin{smallmatrix}
\alpha & \beta \\
\beta  & \alpha
\end{smallmatrix}\right)
\mathrel{\bigl|} \alpha^2-\beta^2 \ne 0 \right\rbrace$ will be given in Theorem~\ref{TheoremPseudoAutoImageTauIsQNilpAlg} below.

 \section{Graded pseudoautomorphisms of $M_{1,1}(\mathbbm{k})$}
 
 By $\langle a \rangle_n$ we denote the cyclic group generated by an element $a$ of order $n$.
 
\begin{theorem}\label{TheoremGrPseudoAutoM11k}
	Let $\mathbbm{k}$ be a field and let $M_{1,1}(\mathbbm{k}) = M_{1,1}(\mathbbm{k})^{(0)}\oplus M_{1,1}(\mathbbm{k})^{(1)}$
	be the full $2\times 2$ matrix algebra with the $\mathbb Z/2 \mathbb Z$ grading defined by 
	$$M_{1,1}(\mathbbm{k})^{(0)} = \left\lbrace \left( \left. \begin{smallmatrix} \gamma_{11} & 0 \\ 0 & \gamma_{22} \end{smallmatrix}\right) \right| \gamma_{11}, \gamma_{22}\in \mathbbm{k} \right\rbrace,\quad
	M_{1,1}(\mathbbm{k})^{(1)} = \left\lbrace \left( \left. \begin{smallmatrix} 0 & \gamma_{12} \\ \gamma_{21} & 0 \end{smallmatrix}\right) \right| \gamma_{12}, \gamma_{21}\in \mathbbm{k} \right\rbrace.$$
	Then \begin{equation}\begin{split}
	\widetilde\Aut\bigl(M_{1,1}(\mathbbm{k})\bigr) \cong
	\mathbbm{k}^\times \times \Bigl(\bigl(\mathbbm{k}^\times \times \mathbbm{k}^\times\bigr)  \leftthreetimes \left\langle (-)^T \right\rangle_2 \Bigr) \times \langle c \rangle_2 \\
	= \Bigl\lbrace (\lambda, \mu, \nu, a, b) \mathrel{\Bigl|} \lambda, \mu, \nu \in \mathbbm{k}^\times,\ a\in \left\langle (-)^T \right\rangle_2,\ b\in \langle c \rangle_2 \Bigr\rbrace   
	\end{split}
	\end{equation} where 
	$$(\lambda, \mu, \nu, 1, 1)\left( \begin{smallmatrix} \gamma_{11} & \gamma_{12} \\ \gamma_{21} & \gamma_{22} \end{smallmatrix}\right) := \left( \begin{smallmatrix} \lambda\, \gamma_{11} & \mu\, \gamma_{12} \\ \nu\, \gamma_{21} & \lambda\, \gamma_{22} \end{smallmatrix}\right),$$
	$$\Bigl(1, 1, 1,  (-)^T, 1\Bigr)\left( \begin{smallmatrix} \gamma_{11} & \gamma_{12} \\ \gamma_{21} & \gamma_{22} \end{smallmatrix}\right) :=\left( \begin{smallmatrix} \gamma_{11} & \gamma_{12} \\ \gamma_{21} & \gamma_{22} \end{smallmatrix}\right)^T=\left( \begin{smallmatrix} \gamma_{11} & \gamma_{21} \\ \gamma_{12} & \gamma_{22} \end{smallmatrix}\right),$$
	$$(1, 1, 1, 1, c)\left( \begin{smallmatrix} \gamma_{11} & \gamma_{12} \\ \gamma_{21} & \gamma_{22} \end{smallmatrix}\right) :=\left( \begin{smallmatrix} \gamma_{22} & \gamma_{12} \\ \gamma_{21} & \gamma_{11} \end{smallmatrix}\right)$$
	and $(-)^T$ is acting on $\mathbbm{k}^\times \times \mathbbm{k}^\times$ by swapping the components.
	In particular,  $\widetilde\Aut\bigl(M_{1,1}(\mathbbm{k})\bigr)$
	is generated by $\Aut^*\bigl(M_{1,1}(\mathbbm{k})\bigr)$ and the group $\Bigl\lbrace (\lambda, \mu, \nu, 1, 1) \mathrel{\Bigl|} \lambda, \mu, \nu \in \mathbbm{k}^\times \Bigr\rbrace$ of all operators that are scalar on homogeneous components. (By $\widetilde\Aut\bigl(M_{1,1}(\mathbbm{k})\bigr)$ and $\Aut^*\bigl(M_{1,1}(\mathbbm{k})\bigr)$
	we denote the groups of graded pseudoautomorphisms and of graded automorphisms and anti-automorphisms, respectively.)
\end{theorem}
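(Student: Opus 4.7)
The plan is to identify which of the six cases listed before Proposition~\ref{PropositionGrPseudoAutoDet2x2AlphaBetaNonzero} applies to each pair of grading components of $\Gamma$, verify that the three listed families of maps are graded pseudoautomorphisms, and then show every graded pseudoautomorphism has the stated form. A direct inspection of products of matrix units shows that the pair $(0,0)$ falls into case~\ref{CaseGrPseudo4} (since $A^{(0)}\cong\mathbbm{k}\times\mathbbm{k}$ is commutative with $A^{(0)}A^{(0)}\ne 0$), the pair $\lbrace 0,1\rbrace$ falls into case~\ref{CaseGrPseudo5} (evaluating a putative identity $\lambda x^{(0)}y^{(1)}+\mu y^{(1)}x^{(0)}\equiv 0$ at $(e_{11},e_{12})$ and $(e_{11},e_{21})$ forces $\lambda=\mu=0$), and $(1,1)$ falls into case~\ref{CaseGrPseudo6} (evaluating $\lambda x^{(1)}y^{(1)}+\mu y^{(1)}x^{(1)}\equiv 0$ at $(e_{12},e_{21})$ forces $\lambda=\mu=0$).

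Next, a short direct computation on matrix units shows that each of the three declared families $(\lambda,\mu,\nu,1,1)$, $(-)^T$, and $c$ is a graded pseudoautomorphism. For instance, $(\lambda,\mu,\nu,1,1)$ yields $\alpha(0,0)=1/\lambda$, $\alpha(0,1)=\alpha(1,0)=1/\lambda$, $\alpha(1,1)=\lambda/(\mu\nu)$, and all $\beta$'s equal to zero; $(-)^T$ is a grading-preserving anti-automorphism, so its $\alpha$'s vanish and its $\beta$'s equal $1$ wherever defined; and $c$ is a genuine graded automorphism.

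The core of the argument is to show every $\varphi\in\widetilde{\Aut}(\Gamma)$ has the claimed form. Since $(0,0)$ is in case~\ref{CaseGrPseudo4}, the map $\alpha(0,0)\,\varphi\big|_{A^{(0)}}$ is an algebra automorphism of the commutative semisimple algebra $\mathbbm{k}e_{11}\oplus\mathbbm{k}e_{22}$, and its only such automorphisms are the identity and the idempotent swap~$c$. So, after possibly composing with $c$, we may assume $\varphi(e_{11})=\lambda\,e_{11}$ and $\varphi(e_{22})=\lambda\,e_{22}$ with $\lambda=1/\alpha(0,0)\in\mathbbm{k}^\times$. Writing $\varphi(e_{12})=r_{12}e_{12}+r_{21}e_{21}$ and $\varphi(e_{21})=s_{12}e_{12}+s_{21}e_{21}$ and applying~\eqref{EqGradedPseudoAutomorphism} to the products $e_{11}e_{12}$, $e_{11}e_{21}$, $e_{22}e_{12}$, $e_{22}e_{21}$ together with their reverses yields a system of linear constraints on these eight unknowns involving $\alpha(0,1),\beta(0,1),\alpha(1,0),\beta(1,0)$. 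Using Proposition~\ref{PropositionGrPseudoAutoDet2x2AlphaBetaNonzero} together with the invertibility of $\varphi\big|_{A^{(1)}}$, these constraints force either $r_{21}=s_{12}=0$ with $r_{12},s_{21}\in\mathbbm{k}^\times$, or $r_{12}=s_{21}=0$ with $r_{21},s_{12}\in\mathbbm{k}^\times$; the second case reduces to the first by further composing with $(-)^T$. Thus $\varphi$ has the form $(\lambda,\mu,\nu,1,1)$ up to multiplication by elements of $\langle(-)^T\rangle_2$ and $\langle c\rangle_2$, with $\mu=r_{12}$ and $\nu=s_{21}$.

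Finally, the semidirect/direct product structure is verified by direct computation: $(-)^T$ conjugates $(\lambda,\mu,\nu,1,1)$ to $(\lambda,\nu,\mu,1,1)$ (interchanging $\mu$ and $\nu$) while fixing $\lambda$, and $c$ commutes with both $(\lambda,\mu,\nu,1,1)$ and $(-)^T$; hence $\langle c\rangle_2$ is a direct factor and the remaining piece is the claimed semidirect product. The main obstacle will be the case analysis in the third step: a priori both the ``diagonal'' and the ``antidiagonal'' components of $\varphi(e_{12})$ and $\varphi(e_{21})$ could be nonzero simultaneously, and ruling this out requires carefully combining the determinant condition of Proposition~\ref{PropositionGrPseudoAutoDet2x2AlphaBetaNonzero} with the evaluations of~\eqref{EqGradedPseudoAutomorphism} on all the relevant matrix-unit products.
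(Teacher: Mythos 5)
Your proposal is correct and follows essentially the same route as the paper: normalize $\varphi$ on the commutative even component via Remark~\ref{RemarkCommAnticomm} (composing with $c$ if needed), then evaluate~\eqref{EqGradedPseudoAutomorphism} on products of $e_{11},e_{22}$ with $e_{12},e_{21}$ to force $\varphi(e_{12})$ and $\varphi(e_{21})$ each onto a single matrix unit, and finish by composing with $(-)^T$. The only cosmetic difference is that the paper gets the diagonal/antidiagonal dichotomy directly from the relations $\alpha(0,1)+\beta(0,1)=1$ and $\varphi(e_{12})=\alpha(0,1)\varphi^{12}_{12}e_{12}+\beta(0,1)\varphi^{21}_{12}e_{21}$ rather than invoking Proposition~\ref{PropositionGrPseudoAutoDet2x2AlphaBetaNonzero}, but this does not change the substance of the argument.
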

\begin{proof} It is easy to see that the subgroup $ \lbrace (\lambda, \mu, \nu, a, b) \rbrace$ indeed exists
	in $\widetilde\Aut\bigl(M_{1,1}(\mathbbm{k})\bigr)$. Therefore it is sufficient to show that an arbitrary 
	$\varphi \in \widetilde\Aut\bigl(M_{1,1}(\mathbbm{k})\bigr)$ belongs to this subgroup.

	Let $\varphi \in \widetilde\Aut\bigl(M_{1,1}(\mathbbm{k})\bigr)$. 	
	Note that $ M_{1,1}(\mathbbm{k})^{(0)} \cong \mathbbm{k} \times \mathbbm{k}$ is commutative. Hence, by Remark~\ref{RemarkCommAnticomm}, up to scalar multiples the restriction of $\varphi$ on $M_{1,1}(\mathbbm{k})^{(0)}$ is an automorphism, however 
	$ \mathbbm{k} \times \mathbbm{k}$  admits only two automorphisms: the identical one
	and $c$. If $\varphi$ is acting by~$c$, then multiplying $\varphi$ by $c$, if necessary, we may assume that $\varphi\bigr|_{M_{1,1}(\mathbbm{k})^{(0)}} =\id_{M_{1,1}(\mathbbm{k})^{(0)}}$.
	
	Denote by $\varphi_{ij}^{k\ell} \in \mathbbm{k}$
	such elements that 	
	$$\varphi(e_{12})=\varphi^{12}_{12} e_{12} + \varphi^{21}_{12} e_{21},\qquad \varphi(e_{21})=\varphi^{12}_{21} e_{12} + \varphi^{21}_{21} e_{21}.$$
	
	By~\eqref{EqGradedPseudoAutomorphism}, 
	\begin{equation*}\begin{split}\varphi(e_{12})=\varphi((e_{11}+e_{22}) e_{12}) = \alpha(0,1)\, (e_{11}+e_{22}) \varphi(e_{12}) + \beta(0,1)\,  \varphi(e_{12})(e_{11}+e_{22})\\= (\alpha(0,1) + \beta(0,1))\varphi(e_{12}),\end{split}
	\end{equation*}
	\begin{equation*}\begin{split}\varphi(e_{12})=\varphi(e_{11} e_{12}) = \alpha(0,1)\, e_{11} \varphi(e_{12}) + \beta(0,1)\,  \varphi(e_{12})e_{11}= \alpha(0,1)\varphi^{12}_{12} e_{12} + \beta(0,1)\varphi^{21}_{12} e_{21}.\end{split}
	\end{equation*}
	Since $\varphi$ is a non-degenerate linear operator, we get $ \alpha(0,1)+\beta(0,1)=1$ and either $ \alpha(0,1)=1, \beta(0,1)=0$ or $ \alpha(0,1)=1, \beta(0,1)=0$.
	Therefore, either $\varphi(e_{12}) \in \mathbbm{k} e_{12}$ or $\varphi(e_{12}) \in \mathbbm{k} e_{21}$.
	Analogously,  either $\varphi(e_{21}) \in \mathbbm{k} e_{21}$ or $\varphi(e_{21}) \in \mathbbm{k} e_{12}$.
	Multiplying $\varphi$ by $(-)^T$, if necessary, we may assume that  $\varphi \in \lbrace (1, \mu, \nu, 1, 1) \mid \mu, \nu\in  \mathbbm{k}^\times \rbrace$ and the theorem is proven.
\end{proof}	
\begin{remark}
	\begin{equation*}\begin{split}
	\Aut^*\bigl(M_{1,1}(\mathbbm{k})\bigr) \cap \Bigl\lbrace (\lambda, \mu, \nu, 1, 1) \mathrel{\Bigl|} \lambda, \mu, \nu \in \mathbbm{k}^\times \Bigr\rbrace \\ = \Aut\bigl(M_{1,1}(\mathbbm{k})\bigr) \cap \Bigl\lbrace (\lambda, \mu, \nu, 1, 1) \mathrel{\Bigl|} \lambda, \mu, \nu \in \mathbbm{k}^\times \Bigr\rbrace=
	\Bigl\lbrace (1, \mu, \mu^{-1}, 1, 1) \mathrel{\Bigl|} \mu \in \mathbbm{k}^\times \Bigr\rbrace.
	\end{split}\end{equation*}
\end{remark}

 \section{Algebras with a generalized $H$-action}\label{SectionHmodGen}
 
  Let $H$ be an arbitrary associative algebra with $1$ over a field $\mathbbm{k}$.
We say that a (not necessarily associative) algebra $A$ is an algebra with a \textit{generalized $H$-action}
if $A$ is a left $H$-module
and for every $h \in H$ there exist some $k\in \mathbb N$ and some $h'_i, h''_i, h'''_i, h''''_i \in H$, $1\leqslant i \leqslant k$,
such that 
\begin{equation}\label{EqGeneralizedHopf}
h(ab)=\sum_{i=1}^k\bigl((h'_i a)(h''_i b) + (h'''_i b)(h''''_i a)\bigr) \text{ for all } a,b \in A.
\end{equation}

Equivalently, there exist linear maps $\Delta, \Theta \colon H \to H\otimes H$ (not necessarily coassociative)
such that 
$$ h(ab)=\sum\bigl((h_{(1)} a)(h_{(2)} b) + (h_{[1]} b)(h_{[2]} a)\bigr) \text{ for all } a,b \in A.$$ (Here we use the notation $\Delta(h)= \sum h_{(1)} \otimes h_{(2)}$ and $\Theta(h)= \sum  h_{[1]} \otimes h_{[2]}$.)

\begin{example}\label{ExampleHmodule} 
An algebra $A$
over a field $\mathbbm{k}$
is a \textit{(left) $H$-module algebra}
for some Hopf algebra~$H$
if $A$ is endowed with a structure of a (left) $H$-module such that
$h(ab)=(h_{(1)}a)(h_{(2)}b)$
for all $h \in H$, $a,b \in A$. Here we use Sweedler's notation
$\Delta h = \sum h_{(1)} \otimes h_{(2)}$ where $\Delta$ is the comultiplication
in $H$ and the symbol of the sum is omitted. If $A$ is an $H$-module algebra,
then $A$ is an algebra with a generalized $H$-action.
\end{example}

\begin{example}\label{ExampleIdFT}
Recall that if $T$ is a semigroup, then the \textit{semigroup algebra} $\mathbbm{k}T$ over a field~$\mathbbm{k}$ is the vector space with the formal basis $(t)_{t\in T}$ and the multiplication induced by the one in $T$.
Let $A$ be an associative algebra with an action 
of a monoid $T$ by endomorphisms and anti-endomorphisms. Then $A$ is an algebra with
 a generalized $\mathbbm{k}T$-action.
\end{example}

\begin{example}\label{ExampleGr}
Let $\Gamma \colon A=\bigoplus_{t\in T} A^{(t)}$ be a grading on an algebra for some set $T$. Then $\Gamma$ defines on $T$
a partial operation $\star$ with the domain $T_0:=\lbrace (s,t) \mid A^{(s)}A^{(t)} \ne 0 \rbrace$ by $s\star t := r$
where $A^{(s)}A^{(t)} \subseteq A^{(r)}$.
Consider the algebra $\mathbbm{k}^T$ of all functions from $T$ to $\mathbbm{k}$ with pointwise operations.
Then $\mathbbm{k}^T$ acts on $A$ naturally: $ha = h(t)a$ for all $a\in A^{(t)}$.
Denote by $\zeta \colon \mathbbm{k}^T \to \End_\mathbbm{k}(A)$
the corresponding homomorphism.
Let $h_t(s):=\left\lbrace\begin{smallmatrix} 1 & \text{if} & s=t,\\ 0 & \text{if} & s\ne t.\end{smallmatrix} \right.$
If $\supp \Gamma$ is finite, $T_0$ is finite too and we have
\begin{equation}\label{EqIdentityHFiniteSupp}h_r(ab)=\sum_{\substack{(s,t)\in T_0,\\ r=s\star t}}
(h_s a)(h_t b). \end{equation}
(Since the expressions in both sides are linear in $a$ and $b$, it is sufficient to check~\eqref{EqIdentityHFiniteSupp} only for homogeneous $a,b$.) In this case $(h_t)_{t\in \supp \Gamma}$
generates $\mathbbm{k}^T$ modulo $\ker \zeta$ as a $\mathbbm{k}$-vector space and, by the linearity, \eqref{EqIdentityHFiniteSupp} implies~(\ref{EqGeneralizedHopf}) for every $h\in \mathbbm{k}^T$. Therefore, in the case of a finite support of the grading, $A$ is an algebra with a generalized $\mathbbm{k}^T$-action.
\end{example}

\begin{remark}
	If $T$ is a group and $A$ is a $T$-graded algebra, then $A$ is a $\mathbbm{k} T$-comodule algebra
	where $\mathbbm{k} T$ is the group Hopf algebra. If $T$ is finite, then $A$ is a $(\mathbbm{k} T)^*$-module algebra
	where the $(\mathbbm{k} T)^*$-action corresponds to the generalized $\mathbbm{k}^T$-action defined above under
	the natural algebra isomorphism $(\mathbbm{k} T)^* \cong \mathbbm{k}^T$.
\end{remark}

 \section{Generalized $H$-actions compatible with gradings}\label{SectionGradedActions}

Let $\Gamma \colon A=\bigoplus_{t\in T} A^{(t)}$ be a grading on some algebra $A$ over a field $\mathbbm{k}$ by a set $T$. Suppose that $A$ is endowed with a generalized $H$-action for some unital associative algebra $H$. We say that this $H$-action is \textit{compatible} with the $T$-grading $\Gamma$
if this $H$-action preserves the components of $\Gamma$,
i.e. if $H A^{(t)} \subseteq A^{(t)}$ for all $t\in T$.

\begin{example}\label{ExampleSuperInvolution2} 
Every algebra with a superinvolution (see Example~\ref{ExampleSuperInvolution}) is naturally endowed
with an action of the group algebra of the cyclic group of order $2$ compatible
with the $\mathbb Z/2\mathbb Z$-grading.
\end{example}
\begin{example}\label{ExamplePseudoInvolution2} 
Every algebra with a pseudoinvolution (see Example~\ref{ExamplePseudoInvolution}) is naturally endowed
with an action of the group algebra of the cyclic group of order $4$ compatible
with the $\mathbb Z/2\mathbb Z$-grading.
\end{example}

As it was noticed by R.\,B. dos Santos in~\cite{dosSantos}, a superinvolution is reduced to a generalized $H$-action.

The same observation can be made in general. Recall that $\mathbbm{k}^T$ is the algebra of all functions $T\to \mathbbm{k}$ with pointwise operations.
\begin{theorem}\label{TheoremGradGenActionReplace} 
Let $\Gamma \colon A=\bigoplus_{t\in T} A^{(t)}$ be a grading on some algebra $A$ over a field $\mathbbm{k}$ by a set $T$. Suppose that $\supp \Gamma$ is finite and $A$ is endowed with a generalized $H$-action for some unital associative algebra $H$
such that $H A^{(t)} \subseteq A^{(t)}$ for all $t\in T$.
Then the formula $$(q\otimes h)a:=q(t)(ha)\text{ where }a\in A^{(t)},\ t\in T,\ q\in \mathbbm{k}^T,\ h\in H$$
defines on $A$ a generalized $\mathbbm{k}^T \otimes H$-action such that
$\mathbbm{k}^T \otimes H$-submodules  of $A$ are exactly 
 $T$-graded $H$-submodules.
\end{theorem}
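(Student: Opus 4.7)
The plan is to verify three things in turn: that the formula endows $A$ with a well-defined $\mathbbm{k}^T \otimes H$-module structure, that this action satisfies the generalized Hopf-type identity~\eqref{EqGeneralizedHopf}, and that the invariant subspaces coincide with $T$-graded $H$-submodules. The key auxiliary elements are the idempotents $h_t \in \mathbbm{k}^T$ from Example~\ref{ExampleGr}, defined by $h_t(s) = 1$ if $s=t$ and $0$ otherwise. Finiteness of $\supp\Gamma$ is the reason the relevant sums will be finite.

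For the module structure, I extend the formula by $\mathbbm{k}$-linearity in both arguments. To check $(q_1 \otimes h_1)(q_2 \otimes h_2)a = (q_1 q_2 \otimes h_1 h_2)a$, I take $a \in A^{(t)}$: since $h_2 a \in A^{(t)}$ by compatibility of the $H$-action with $\Gamma$, both sides equal $q_1(t) q_2(t) (h_1 h_2) a$. The unit $1 \otimes 1$ acts trivially.

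The main computation is the generalized action identity. Writing $a = \sum_s a^{(s)}$ and $b = \sum_t b^{(t)}$ (finite sums after reducing to $\supp\Gamma$) and applying~\eqref{EqGeneralizedHopf} for $H$ on each homogeneous pair, I obtain
\begin{equation*}
(q \otimes h)(ab) = \sum_{(s,t) \in T_0} q(s \star t) \sum_{i=1}^{k} \bigl((h'_i a^{(s)})(h''_i b^{(t)}) + (h'''_i b^{(t)})(h''''_i a^{(s)})\bigr);
\end{equation*}
pairs outside $T_0$ drop out because $A^{(s)}A^{(t)}=0$ there. The essential factorization is
\begin{equation*}
q(s \star t)\, (h'_i a^{(s)})(h''_i b^{(t)}) = \bigl((h_s \otimes h'_i) a\bigr)\bigl((q(s \star t)\, h_t \otimes h''_i) b\bigr),
\end{equation*}
and analogously for the second term with $h'''_i, h''''_i$. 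Since $(\supp\Gamma)^2$ is finite, summing over $(s,t)$ and $i$ produces a finite expression of the required form in $\mathbbm{k}^T \otimes H$, establishing~\eqref{EqGeneralizedHopf}.

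For the submodule characterization, note that $1 \otimes h$ acts as $h$ and that $h_t \otimes 1$ acts as the projection onto $A^{(t)}$. Hence any $\mathbbm{k}^T \otimes H$-submodule $B$ is automatically $H$-invariant and, being stable under each projection $h_t \otimes 1$, decomposes as $B = \bigoplus_t (B \cap A^{(t)})$, so $B$ is $T$-graded and $H$-invariant. Conversely, for such $B$ and $b = \sum_t b^{(t)}$ with $b^{(t)} \in B \cap A^{(t)}$, the formula gives $(q \otimes h) b = \sum_t q(t)\, (h b^{(t)}) \in B$. The main obstacle is purely bookkeeping in the central computation: the scalar $q(s \star t)$ must be absorbed into one tensor factor while the idempotents $h_s, h_t$ extract the correct homogeneous components of $a$ and $b$; once this is organized, the rest is a routine unpacking of the definitions.
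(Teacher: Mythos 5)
Your proof is correct and follows essentially the same route as the paper: introduce the idempotents $h_t\in\mathbbm{k}^T$ and the partial operation $\star$ on $T_0$, decompose $(q\otimes h)(ab)$ over the finitely many pairs in $T_0$, and refactor each term as a product of actions of elements of $\mathbbm{k}^T\otimes H$. You are in fact somewhat more complete than the paper, which only records the key identity for the spanning elements $q_r\otimes h$ and leaves the verification of the module structure and of the submodule characterization implicit.
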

\begin{proof}
Analogously to Example~\ref{ExampleGr} define $q_t \in \mathbbm{k}^T$ for all $t\in T$
by $q_t(s):=\left\lbrace\begin{smallmatrix} 1 & \text{for} & s=t,\\ 0 & \text{for} & s\ne t\end{smallmatrix} \right.$ and define on $T$
a partial operation $\star$ with the domain $T_0:=\lbrace (s,t) \mid A^{(s)}A^{(t)} \ne 0 \rbrace$
by $s\star t := r$
where $A^{(s)}A^{(t)}\subseteq A^{(r)}$.
Now~\eqref{EqGeneralizedHopf} applied to the generalized $H$-action
on $A$ implies that for every $h \in H$ there exists $k\in \mathbb N$ and $h'_i, h''_i, h'''_i, h''''_i \in H$, where $1\leqslant i \leqslant k$,
such that
\begin{equation}\label{EqGradedActionGenHAction}
(q_r \otimes h)(ab)=\sum_{i=1}^k \sum_{\substack{(s,t)\in T_0,\\ r=s\star t}} \Bigl(\bigl((q_s\otimes h'_i) a\bigr)\bigl((q_t\otimes h''_i) b\bigr) + \bigl((q_t\otimes h'''_i) b\bigr)\bigl((q_s\otimes h''''_i) a\bigr)\Bigr)
\end{equation}
 for all $r\in T$ and $a,b\in A$.
(Since the both sides of~\eqref{EqGradedActionGenHAction}
are linear in $a$ and $b$, it is sufficient to check this equality for homogeneous $a$ and $b$.)
Note that modulo the kernel of the action $\mathbbm{k}^T \otimes H
\to \End_\mathbbm{k}(A)$ the algebra $\mathbbm{k}^T \otimes H$
is the linear span of the elements $q_r \otimes h$ where $r\in \supp \Gamma$ and $h\in H$.
Now~\eqref{EqGeneralizedHopf} for the $\mathbbm{k}^T \otimes H$-action follows from~\eqref{EqGradedActionGenHAction}.
\end{proof}

It is obvious that in the case when the support of the grading consists of a single element,
such a generalized $\mathbbm{k}^T \otimes H$-action
reduces to just a generalized $H$-action for the same algebra~$H$.
Therefore examples of associative algebras with generalized $H$-actions
having non-$H$-invariant Jacobson radicals (see Examples 1--3 in~\cite{ASGordienko13} combined with Example~\ref{ExampleGr} from the current article)
show that in algebras with a generalized $H$-action
compatible with a $T$-grading the Jacobson radical is not necessarily
a $\mathbbm{k}^T \otimes H$-module.

Now we would like to find sufficient conditions on an $H$-action that 
force the Jacobson radical to be $\mathbbm{k}^T \otimes H$-invariant.
Inspired by Examples~\ref{ExampleSuperInvolution}
and~\ref{ExamplePseudoInvolution}, consider the following rather general case.

\begin{definition}\label{DefGradedAction}
Let $G$ be a group and let $\Gamma \colon A=\bigoplus_{t\in T} A^{(t)}$ be a grading on an algebra $A$ over a field $\mathbbm{k}$
by a group $T$. A group homomorphism
$G \to \GL(A)$ is called a \textit{$T$-graded $G$-action}
if there exist functions 
$\alpha, \beta \colon G \times T \times T \to \mathbbm{k}$  such that $gA^{(t)}\subseteq A^{(t)}$ and
\begin{equation}\label{EqGradedAction}
g(ab)=\alpha(g,s,t)(ga)(gb)+\beta(g,s,t)(gb)(ga)
\end{equation}
  for all $g\in G$, $s,t\in T$, $a\in A^{(s)}$ and $b\in A^{(t)}$.
  \end{definition}
  
  \begin{remark}
  Condition~\eqref{EqGradedAction} implies that $\beta(g,s,t)=0$ for all $g\in G$ and $s,t\in T$
such that $A^{(t)}A^{(s)}\ne 0$ and $st\ne ts$.
  \end{remark}
\begin{remark}
Every $T$-graded $G$-action is just a homomorphism $G\to \widetilde{\Aut}(\Gamma) $.
  \end{remark}
  
  Theorem~\ref{TheoremGradGenActionReplace}
implies that every $T$-graded $G$-action
is reduced to a generalized $\mathbbm{k}^T\otimes \mathbbm{k}G$-action.
Below we prove that the Jacobson radical of a finite dimensional associative
algebra is invariant under any such action.

We first show that the image of every graded two-sided ideal is again a graded two-sided ideal
and if the ideal is nilpotent, its image is nilpotent too.

\begin{lemma}\label{LemmaGradedActionIdeal} Let $A=\bigoplus_{t\in T} A^{(t)}$
be an algebra over a field $\mathbbm{k}$ with a $T$-graded $G$-action 
for some groups $G$ and $T$.
Then for every $g\in G$ and a graded two-sided ideal $I\subseteq A$
the space $gI$ 
is again a graded two-sided ideal. Moreover, if $I$ is nilpotent, then $gI$ is nilpotent too.
\end{lemma}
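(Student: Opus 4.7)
The plan is to reduce both claims to the fact that, by Proposition~\ref{PropositionGroupGradedPseudoAutomorphism}, the inverse $g^{-1}$ is again a graded pseudoautomorphism of $\Gamma$, hence comes with its own coefficient functions $\alpha', \beta'\colon \supp\Gamma \times \supp\Gamma \to \mathbbm{k}$ satisfying $g^{-1}(uv) = \alpha'(p,q)(g^{-1}u)(g^{-1}v) + \beta'(p,q)(g^{-1}v)(g^{-1}u)$ for all homogeneous $u\in A^{(p)}$, $v\in A^{(q)}$. That $gI$ is graded is then immediate: since $g$ preserves each $A^{(t)}$, we have $gI = \bigoplus_{t\in T} g\bigl(I\cap A^{(t)}\bigr)$ with each summand contained in $A^{(t)}$.

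To show two-sidedness, I would pick a homogeneous $b = ga$ with $a\in I\cap A^{(s)}$ and an arbitrary homogeneous $c\in A^{(t)}$, and use the bijectivity of $g$ (together with the fact that it preserves the grading) to write $c = gy$ for the unique $y\in A^{(t)}$ with $gy=c$. Applying the pseudoautomorphism identity for $g^{-1}$ to $cb = (gy)(ga)$ gives $g^{-1}(cb) = \alpha'(t,s)\, ya + \beta'(t,s)\, ay \in I$, so $cb\in gI$. The symmetric computation handles $bc$, and by linearity across graded components this shows that $gI$ absorbs multiplication on either side by an arbitrary element of $A$.

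For nilpotence my plan is to prove by induction on $n$ that $(gI)^n \subseteq g(I^n)$. The base $n=1$ is tautological. Given the inductive hypothesis, I would take homogeneous $u = gw\in g(I^n)\cap A^{(p)}$ and $v = gx\in g(I)\cap A^{(q)}$; the same identity applied to $uv = (gw)(gx)$ yields $g^{-1}(uv) = \alpha'(p,q)\, wx + \beta'(p,q)\, xw \in I^{n+1}$, so $uv\in g(I^{n+1})$. Taking $n$ with $I^n = 0$ then forces $(gI)^n \subseteq g(I^n)=0$.

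I do not foresee a substantive obstacle: the only care needed is the consistent passage to homogeneous components so that the coefficients $\alpha', \beta'$ are meaningful, and to note that in the degenerate situations from the case list preceding Proposition~\ref{PropositionGrPseudoAutoDet2x2AlphaBetaNonzero} where some of these coefficients vanish (e.g. case~\ref{CaseGrPseudo1}), the relevant products themselves already vanish and the assertions are trivial.
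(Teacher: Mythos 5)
Your proposal is correct and follows essentially the same route as the paper: both reduce everything to applying the pseudoautomorphism identity for $g^{-1}$ to homogeneous elements, the only cosmetic difference being that the paper reads the coefficients $\alpha(g^{-1},s,t)$, $\beta(g^{-1},s,t)$ directly off Definition~\ref{DefGradedAction} rather than invoking Proposition~\ref{PropositionGroupGradedPseudoAutomorphism}. One small caveat: since the lemma covers not necessarily associative algebras, your induction $(gI)^{n+1}\subseteq g(I^{n+1})$ should run over all splittings $u\in (gI)^{j}$, $v\in (gI)^{n+1-j}$ (equivalently, all bracket arrangements, as in the paper's proof), not just the case where a single factor is peeled off the right; the key computation you give handles this general step verbatim.
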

\begin{proof} Every $a \in I$
is a sum of its homogeneous components, which belong to $I$, since $I$ is graded.
Applying to $a$ an arbitrary element $g\in G$ and taking into account that the $G$-action
preserves homogeneous components, we obtain that
$ga$ is a sum of  homogeneous elements that belong to $gI$. Hence the space $gI$ is graded.
 
Now, in order to prove that $gI$ is a two-sided ideal, it is sufficient
to show that $ab, ba \in gI$ for all $a \in A^{(s)}$, $b\in (gI)\cap A^{(t)}$ where $s,t \in T$.
By~\eqref{EqGradedAction} we have
$$g^{-1}(ab)= \alpha(g^{-1},s,t)(g^{-1}a)(g^{-1}b)+\beta(g^{-1},s,t)(g^{-1}b)(g^{-1}a) \in I$$
and
$$g^{-1}(ba)= \alpha(g^{-1},t,s)(g^{-1}b)(g^{-1}a)+\beta(g^{-1},t,s)(g^{-1}a)(g^{-1}b) \in I.$$
Hence $gI$ is a graded two-sided ideal.

Suppose that a product of arbitrary  $n$  elements of $I$ with an arbitrary arrangement
of brackets equals $0$. We claim that $gI$ has the same property.
 Indeed, let $a_1, \ldots, a_n \in gI$ be homogeneous elements. Consider $g^{-1}(a_1 \ldots a_n)$
with an arbitrary arrangement of brackets on
$a_1 \ldots a_n$.
Applying~\eqref{EqGradedAction} $(n-1)$ times,
we obtain that  $g^{-1}(a_1 \ldots a_n)$
is a linear combination of products of $g^{-1}a_i$ taken in an arbitrary order.
Since $g^{-1}a_i \in I$ for all $i=1,\ldots, n$ and each product consists of $n$
multipliers, all such products equal zero, and $g^{-1}(a_1 \ldots a_n)=0$.
Hence $a_1 \ldots a_n=0$ and the ideal $gI$ is nilpotent too.
\end{proof}

Using Lemma~\ref{LemmaGradedActionIdeal}, we now deduce the invariance of the Jacobson radical: 

\begin{theorem}\label{TheoremTGradedGActionInvRad} Let $A=\bigoplus_{t\in T} A^{(t)}$
be a finite dimensional associative algebra over a field $\mathbbm{k}$ with a $T$-graded $G$-action 
for some groups $G$ and $T$. Suppose that either $\ch \mathbbm{k} = 0$ or $\ch \mathbbm{k} > \dim_\mathbbm{k} A$.
Then $$\bigl(\mathbbm{k}^T\otimes \mathbbm{k}G\bigr)J(A)\subseteq J(A).$$
\end{theorem}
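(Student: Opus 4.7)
The plan is to split the desired invariance $(\mathbbm{k}^T\otimes \mathbbm{k}G)J(A)\subseteq J(A)$ into two independent assertions: (a) that $J(A)$ is $T$-graded, equivalently $(\mathbbm{k}^T\otimes 1)J(A)\subseteq J(A)$; and (b) that $gJ(A)\subseteq J(A)$ for every $g\in G$, equivalently $(1\otimes \mathbbm{k}G)J(A)\subseteq J(A)$. Since the algebra $\mathbbm{k}^T\otimes \mathbbm{k}G$ is generated by its two tensor factors, the conjunction of (a) and (b) yields the full statement.

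For (a), I would first note that $\supp \Gamma$ is finite because $\dim_\mathbbm{k} A<\infty$, so up to enlarging with zero components the grading is effectively by the finitely generated subgroup of $T$ generated by $\supp \Gamma$. Under the hypothesis $\ch \mathbbm{k}=0$ or $\ch \mathbbm{k}>\dim_\mathbbm{k} A$, the Jacobson radical of a finite-dimensional group-graded associative algebra is homogeneous with respect to the grading; this is the classical fact that one needs here, and the characteristic condition is precisely what supports it. A self-contained argument would extend scalars to an algebraic closure (which preserves both the graded decomposition and the Jacobson radical under the characteristic assumption), produce enough characters on an abelianized quotient of the group generated by $\supp \Gamma$, and then use a Vandermonde-type argument to show that the homogeneous components of any element of $J(A)$ again lie in $J(A)$. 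Equivalently, each projection $\pi_t\colon A\to A^{(t)}$ associated with the generalized $\mathbbm{k}^T$-action from Example~\ref{ExampleGr} preserves $J(A)$.

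For (b), once $J(A)$ is known to be a $T$-graded two-sided ideal, I apply Lemma~\ref{LemmaGradedActionIdeal} for each $g\in G$: the image $gJ(A)$ is again a graded two-sided ideal of $A$, and because $J(A)$ is nilpotent (as $A$ is finite-dimensional), the lemma also delivers the nilpotence of $gJ(A)$. Since $J(A)$ is the unique maximal nilpotent two-sided ideal of $A$, this forces $gJ(A)\subseteq J(A)$, as required.

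The main obstacle is (a): the gradedness of $J(A)$ itself, which is the nontrivial input, especially if $T$ is non-abelian or infinite and one insists on a self-contained proof; the characteristic condition in the hypothesis is present precisely to overcome this. Once (a) is settled, step (b) is a quick corollary of Lemma~\ref{LemmaGradedActionIdeal} combined with the maximality of the Jacobson radical among nilpotent ideals, and the final combination into an $(\mathbbm{k}^T\otimes \mathbbm{k}G)$-invariance is purely formal.
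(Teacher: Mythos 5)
Your proposal is correct and follows essentially the same route as the paper: the paper likewise first invokes the homogeneity of $J(A)$ with respect to the group grading (citing Corollary~3.3 of~\cite{ASGordienko9}, which is exactly the ``classical fact'' you identify as the nontrivial input under the characteristic hypothesis) to get $\mathbbm{k}^T$-invariance, and then applies Lemma~\ref{LemmaGradedActionIdeal} together with the maximality of $J(A)$ among nilpotent ideals to get $G$-invariance. The only difference is that you sketch an argument for the gradedness of the radical where the paper simply cites it.
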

\begin{proof}
The Jacobson radical $J(A)$ is a graded (see, e.g., Corollary~3.3 in~\cite{ASGordienko9})
and, therefore, a $\mathbbm{k}^T$-invariant ideal. By Lemma~\ref{LemmaGradedActionIdeal} the radical $J(A)$
is $G$-invariant too.
\end{proof}

In order to prove the invariant version of the Wedderburn--Artin theorem,
we need the following lemma:

  \begin{lemma}\label{LemmaAnnIdealTGradedGAction}
  Let $A=\bigoplus_{t\in T} A^{(t)}$
be an algebra over a field $\mathbbm{k}$ with a $T$-graded $G$-action 
for some groups $G$ and $T$.
Let $M\subseteq A$ be a $G$-invariant $T$-graded subspace.
Then $$\Ann_{\mathrm{lr}}(M) := \lbrace a \in A \mid ab=ba=0 \text{ for all } b \in M \rbrace$$ 
is a $G$-invariant $T$-graded subspace too.
  \end{lemma}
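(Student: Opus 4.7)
The plan has two stages: first establish that $\Ann_{\mathrm{lr}}(M)$ is $T$-graded, then deduce $G$-invariance using~\eqref{EqGradedAction} together with the case analysis preceding Proposition~\ref{PropositionGrPseudoAutoDet2x2AlphaBetaNonzero}.

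For the graded part, I would take an arbitrary $a \in \Ann_{\mathrm{lr}}(M)$, write $a = \sum_t a^{(t)}$ with $a^{(t)} \in A^{(t)}$, and show each $a^{(t)}$ lies in $\Ann_{\mathrm{lr}}(M)$. Because $M$ is $T$-graded, it suffices to test annihilation against a homogeneous $b^{(r)} \in M\cap A^{(r)}$. The relation $\sum_t a^{(t)} b^{(r)} = a b^{(r)} = 0$ expresses $0$ as a sum with the $t$-th summand in $A^{(tr)}$; since $T$ is a group, the map $t \mapsto tr$ is a bijection, so the summands lie in pairwise distinct graded components and each must vanish individually. Applying the same reasoning to $b^{(r)} a = 0$ finishes this step.

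For $G$-invariance, gradedness reduces the task to showing $(ga)b = b(ga) = 0$ for homogeneous $a \in A^{(s)} \cap \Ann_{\mathrm{lr}}(M)$ and $b \in A^{(t)} \cap M$. The key observation is that, since the $G$-action preserves both the grading and $M$, we have $g^{-1}b \in A^{(t)} \cap M$, so $a(g^{-1}b) = (g^{-1}b)a = 0$. Applying $g$ to these two identities via~\eqref{EqGradedAction} yields the homogeneous linear system
\begin{align*}
\alpha(g,s,t)(ga)b + \beta(g,s,t)b(ga) &= 0,\\
\beta(g,t,s)(ga)b + \alpha(g,t,s)b(ga) &= 0
\end{align*}
in the unknowns $(ga)b$ and $b(ga)$, whose coefficient matrix is exactly the one appearing in Proposition~\ref{PropositionGrPseudoAutoDet2x2AlphaBetaNonzero}.

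The main step is extracting $(ga)b = b(ga) = 0$ from this system, which is where I would invoke the six cases preceding Proposition~\ref{PropositionGrPseudoAutoDet2x2AlphaBetaNonzero} for the pair $(s,t)$. In case~\ref{CaseGrPseudo1} both products vanish trivially because $A^{(s)}A^{(t)} = A^{(t)}A^{(s)} = 0$; in case~\ref{CaseGrPseudo2} one product vanishes by degree reasons and the other is forced by the first equation (whose $\beta$-coefficient is zero while $\alpha(g,s,t) \ne 0$); in cases~\ref{CaseGrPseudo3} and~\ref{CaseGrPseudo4} all the relevant $\beta$-coefficients vanish while the $\alpha$'s do not, so the coefficient matrix is diagonal and invertible; and in cases~\ref{CaseGrPseudo5} and~\ref{CaseGrPseudo6} Proposition~\ref{PropositionGrPseudoAutoDet2x2AlphaBetaNonzero} directly guarantees that the determinant is nonzero. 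The main obstacle is purely bookkeeping across these six branches, but since each one admits only the trivial solution, the conclusion $(ga)b = b(ga) = 0$ is uniform.
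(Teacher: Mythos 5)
Your proof is correct, but for the $G$-invariance step it takes a genuinely different route from the paper. The paper applies $g^{-1}$ directly to the \emph{unknown} products: using~\eqref{EqGradedAction} for the element $g^{-1}$, one gets
$g^{-1}\bigl((ga)b\bigr)=\alpha(g^{-1},s,t)\,a(g^{-1}b)+\beta(g^{-1},s,t)\,(g^{-1}b)a=0$
because $g^{-1}b\in M$ and $a\in\Ann_{\mathrm{lr}}(M)$, and then concludes $(ga)b=0$ from the injectivity of $g^{-1}$ (similarly for $b(ga)$). This gives the result in two lines with no case analysis and no invertibility of any coefficient matrix. You instead apply $g$ to the \emph{known-zero} products $a(g^{-1}b)$ and $(g^{-1}b)a$, obtaining a $2\times 2$ homogeneous linear system in $(ga)b$ and $b(ga)$, and then must argue the coefficient matrix forces the trivial solution by running through the six cases and Proposition~\ref{PropositionGrPseudoAutoDet2x2AlphaBetaNonzero}. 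That works — your case-by-case reasoning is sound, since in cases~\ref{CaseGrPseudo1}--\ref{CaseGrPseudo4} the relevant products either vanish for degree reasons or the normalized matrix is triangular/diagonal with nonzero $\alpha$'s, and in cases~\ref{CaseGrPseudo5}--\ref{CaseGrPseudo6} the proposition gives a nonzero determinant — but it is considerably heavier, and it quietly relies on choosing the \emph{normalized} coefficients of the case analysis rather than the arbitrary $\alpha(g,s,t),\beta(g,s,t)$ of Definition~\ref{DefGradedAction}. One point in your favour: you spell out why $\Ann_{\mathrm{lr}}(M)$ is graded (injectivity of $t\mapsto tr$ for a group grading), a detail the paper dismisses in one sentence.
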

  \begin{proof}Being an annihilator of a  $T$-graded subspace,
  the space $\Ann_{\mathrm{lr}}(M)$ is $T$-graded.
  We claim that $\Ann_{\mathrm{lr}}(M)$ is, in addition, $G$-invariant.
  
  Let $a \in \Ann_{\mathrm{lr}}(M) \cap A^{(s)}$, $b\in M \cap A^{(t)}$, $s,t \in T$, $g\in G$. Then
$$ g^{-1}\bigl((ga)b\bigr)=
 \alpha(g^{-1},s,t)a(g^{-1}b)+\beta(g^{-1},s,t)(g^{-1}b)a
=0.$$
Hence $(ga)b=0$. 

Analogously,
$$g^{-1}(b(ga))= \alpha(g^{-1},t,s)(g^{-1}b)a+\beta(g^{-1},t,s)a(g^{-1}b)=0.$$
Hence $b(ga)=0$.

Since the equalities \begin{equation}\label{EqGabbga}(ga)b=b(ga)=0\end{equation} are linear in $a$ and $b$ and
the spaces $M$ and $\Ann_{\mathrm{lr}}(M)$
are graded, the
equalities~\eqref{EqGabbga} hold for all $a \in \Ann_{\mathrm{lr}}(M)$, $b\in M$.
    Therefore, $ga \in \Ann_{\mathrm{lr}}(M)$, which implies that $\Ann_{\mathrm{lr}}(M)$
is $G$-invariant.
\end{proof}

Now we are ready to prove the invariant version of the Wedderburn--Artin theorem:

\begin{theorem}\label{TheoremTGradedGActionInvWedderburnArtin} Let $\Gamma \colon B=\bigoplus_{t\in T} B^{(t)}$ be a $T$-grading on a finite dimensional semisimple associative algebra $B$ over a field $\mathbbm{k}$
with a $T$-graded $G$-action where $T$ and $G$ are arbitrary groups.
Then $$B = B_1 \oplus B_2 \oplus \dots \oplus B_s$$
  (direct sum of $T$-graded $G$-invariant ideals) for some algebras $B_i$
  with a graded $T$-action where $T$ and $G$ such that each $B_i$
  is simple in the appropriate sense, i.e. does not contain
nontrivial $T$-graded $G$-invariant ideals.
\end{theorem}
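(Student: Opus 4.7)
The plan is to induct on $\dim_{\mathbbm{k}} B$: at each step split off a minimal nonzero $T$-graded $G$-invariant two-sided ideal and use Lemma~\ref{LemmaAnnIdealTGradedGAction} to produce an invariant complement. The base case ($B=0$) is trivial, and if $B$ has no nontrivial $T$-graded $G$-invariant ideal, we take $s=1$ and $B_1:=B$.

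For the inductive step, since $B$ is finite dimensional, choose a minimal nonzero $T$-graded $G$-invariant two-sided ideal $B_1\subseteq B$, and set
\[
B':=\Ann_{\mathrm{lr}}(B_1)=\lbrace a\in B \mid ab=ba=0\text{ for all }b\in B_1\rbrace.
\]
By Lemma~\ref{LemmaAnnIdealTGradedGAction}, $B'$ is $T$-graded and $G$-invariant, and a routine check using that $B_1$ is a two-sided ideal shows that $B'$ is itself a two-sided ideal of $B$.

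The key structural point is the splitting $B=B_1\oplus B'$. For this I invoke classical Wedderburn--Artin: since $B$ is semisimple, every two-sided ideal is generated by a central idempotent, so $B_1=e_1B=Be_1$ for a central idempotent $e_1\in B_1$. Then $(1-e_1)B\subseteq B'$, giving $B=B_1+B'$; while any $a\in B_1\cap B'$ satisfies $a=e_1a=0$, because $e_1\in B_1$ and $a$ annihilates $B_1$. Moreover, any $T$-graded $G$-invariant ideal of $B_1$ (viewed as an algebra in its own right, with unit $e_1$) is automatically a $T$-graded $G$-invariant ideal of $B$, because $B=B_1\oplus B'$ and $B'$ kills $B_1$ on both sides; thus the minimality of $B_1$ forces $B_1$ to have no nontrivial $T$-graded $G$-invariant ideals of its own, which is exactly the required ``simplicity''.

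Finally, $B'$ is a finite dimensional semisimple associative algebra (as a direct summand of $B$) and inherits the $T$-grading together with the $T$-graded $G$-action, so the inductive hypothesis decomposes $B'=B_2\oplus\cdots\oplus B_s$ with the required properties. The step I expect to be the main obstacle is the direct-sum decomposition $B=B_1\oplus B'$ together with the transfer of $T$-graded $G$-invariant ideals between $B_1$ and $B$; both rely on the central-idempotent structure of semisimple algebras, which is standard but has to be unpacked carefully since $B_1$ need not be simple as an ungraded algebra and need not have connected support.
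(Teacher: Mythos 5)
Your proposal is correct and follows essentially the same route as the paper: pick a minimal $T$-graded $G$-invariant ideal $B_1$, use the classical Wedderburn--Artin structure of the semisimple algebra $B$ to obtain $B=B_1\oplus\Ann_{\mathrm{lr}}(B_1)$, invoke Lemma~\ref{LemmaAnnIdealTGradedGAction} to see that the complement is again $T$-graded and $G$-invariant, and induct. Your unpacking via the central idempotent $e_1$, and the explicit remark that $T$-graded $G$-invariant ideals of $B_1$ are ideals of $B$, merely make explicit what the paper leaves to the reader.
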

\begin{proof}
Since $B$ is finite dimensional, we can find a minimal $T$-graded $G$-invariant ideal $B_1$ in~$B$. By the original 
Wedderburn--Artin theorem, $B_1$ is a direct sum of some minimal ideals of $B$ and $B = B_1 \oplus \Ann_{\mathrm{lr}}(B_1)$
where $ \Ann_{\mathrm{lr}}(B_1)$ is the direct sum of the other minimal ideals of $B$. By Lemma~\ref{LemmaAnnIdealTGradedGAction}
the ideal  $ \Ann_{\mathrm{lr}}(B_1)$ is a $T$-graded $G$-invariant ideal and we may apply the induction argument to $ \Ann_{\mathrm{lr}}(B_1)$. 
\end{proof}

We conclude the section with another example of an algebra $A$ such that $\tau\left(\widetilde{\Aut}(A)\right)=Q:=\left\lbrace
\left(\begin{smallmatrix}
\alpha & \beta \\
\beta  & \alpha
\end{smallmatrix}\right)
\mathrel{\bigl|} \alpha^2-\beta^2 \ne 0 \right\rbrace$. (See Section~\ref{SectionUngradedPseudoAuto}.)

Consider the vector space $J=\langle u, v, w \rangle_\mathbbm{k}$ with the basis $u,v,w$
and the bilinear form $$\theta \colon J \times J \to \mathbbm{k}$$
with the matrix  $\left(\begin{array}{rrr}
1   & 1 & 0 \\
-1   & 0 & 0 \\
0   & 0 & 0
\end{array}\right)$ in this basis.
Define the product $ab := \theta(a,b)w$ for all $a,b\in J$. It is easy to see
that $J^2=\langle w \rangle_\mathbbm{k}$ and $J^3=0.$ In particular, $J$ is a nilpotent associative algebra.
Consider $A_0:= \mathbbm{k}1\oplus J$ where $1$ is the adjoint identity element of $A_0$.

\begin{theorem}\label{TheoremPseudoAutoImageTauIsQNilpAlg}
	Let $A_0$ be the algebra defined above, $\ch \mathbbm{k} \ne 2$. 
	Then $$\widetilde{\Aut}(A_0) = \Aut(A_0) \leftthreetimes \left(\mathbbm{k}^\times \times \mathbbm{k}^\times \right)$$
	and if we identify the linear operators $\varphi$ on $A_0$ with their matrices
	in the basis $1, u, v, w$,
	then 
	\begin{equation*}\begin{split}\widetilde{\Aut}(A_0) =\left\lbrace
	\left(\begin{array}{cccc}
	\frac{1}{\alpha+\beta} &  0   & 0 & 0 \\
	0 &  c_{22}   & 0 & 0 \\
	0 &  c_{32}   & c_{33} & 0 \\
	0 &  c_{42}   & c_{43} & c_{44}
	\end{array}\right) \right|  c_{22}, c_{33}, c_{44}\in \mathbbm{k}^\times,\\ \left. c_{32}, c_{42}, c_{43}\in \mathbbm{k},\ \alpha = \frac{c_{44}}{2c_{22}^2} \frac{c_{33}+c_{22}}{c_{33}},
	\  \beta = \frac{c_{44}}{2c_{22}^2} \frac{c_{33}-c_{22}}{c_{33}}
	\right\rbrace,\end{split}\end{equation*}
		$$\Aut(A_0) =\left\lbrace\left.
			\left(\begin{array}{cccc}
				1 &  0   & 0 & 0 \\
				0 &  c_{22}   & 0 & 0 \\
				0 &  c_{32}   & c_{22} & 0 \\
				0 &  c_{42}   & c_{43} & c_{22}^2
			\end{array}\right) \right|  c_{22}\in \mathbbm{k}^\times,\ c_{32}, c_{42}, c_{43}\in \mathbbm{k}\right\rbrace,$$
	$\tau(\varphi)=\left(\begin{smallmatrix}
\alpha & \beta \\
\beta  & \alpha
\end{smallmatrix}\right)$
	and $\mathbbm{k}^\times \times \mathbbm{k}^\times\cong Q$ is embedded in $\widetilde{\Aut}(A_0)$
	by $$\left(\begin{smallmatrix}
	\alpha & \beta \\
	\beta  & \alpha
	\end{smallmatrix}\right) \mapsto \left(\begin{array}{cccc}
	\frac{1}{\alpha+\beta} &  0   & 0 & 0 \\
	0 &  \alpha-\beta   & 0 & 0 \\
	0 &  0   & \alpha+\beta & 0 \\
	0 &  0   & 0 & (\alpha^2-\beta^2)(\alpha-\beta)	\end{array}\right).$$

	In particular, $\widetilde{\Aut}(A_0)$ is isomorphic to the group of all invertible lower triangular
	$3\times 3$ matrices.
\end{theorem}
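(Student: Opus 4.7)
My plan is to extract the structure of an arbitrary $\varphi\in\widetilde{\Aut}(A_{0})$ layer by layer, reducing the pseudoautomorphism condition on $J$ to a pair of matrix identities coming from the symmetric and antisymmetric parts of the bilinear form $\theta$.

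First I note that since $\theta(u,v)\ne\theta(v,u)$ and $\theta(u,u)\ne 0$, the algebra $A_{0}$ is neither commutative nor anti-commutative; hence $\alpha^{2}-\beta^{2}\ne 0$ for every $\varphi\in\widetilde{\Aut}(A_{0})$, and Proposition~\ref{PropositionUnitPseudoAuto} pins down $\varphi(1_{A_{0}})=\frac{1}{\alpha+\beta}\,1_{A_{0}}$. Because $A_{0}/J\cong\mathbbm{k}$ is a field, $J$ is the unique proper two-sided ideal of dimension $3$; using the inversion formula~\eqref{EqVarphiABInverse} (valid since $\alpha^{2}\ne\beta^{2}$) one sees that $\varphi(J)$ is again a two-sided ideal, hence $\varphi(J)=J$. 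Applying~\eqref{EqVarphiABInverse} once more to products of elements of $J$ shows $\varphi(J^{2})\subseteq J^{2}=\langle w\rangle$, so $\varphi(w)=c_{44}\,w$ for some $c_{44}\in\mathbbm{k}^\times$.

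The heart of the argument is to pin down the matrix $P$ of $\varphi\bigr|_{J}$ in the basis $u,v,w$. Writing the multiplication on $J$ as $ab=\theta(a,b)w$ and decomposing $\theta=S+K$ into its symmetric part $S=e_{11}$ and antisymmetric part $K=e_{12}-e_{21}$, identifying coefficients of $w$ in the defining identity of $\varphi$ yields
\begin{equation*}
(\alpha+\beta)\,P^{T}SP+(\alpha-\beta)\,P^{T}KP\;=\;c_{44}(S+K),
\end{equation*}
and splitting into symmetric and antisymmetric parts gives the two independent relations $(\alpha+\beta)P^{T}SP=c_{44}S$ and $(\alpha-\beta)P^{T}KP=c_{44}K$. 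An entrywise computation using $(P^{T}SP)_{ij}=P_{1i}P_{1j}$ and $(P^{T}KP)_{ij}=P_{1i}P_{2j}-P_{2i}P_{1j}$ forces first $P_{12}=P_{13}=0$ and $(\alpha+\beta)P_{11}^{2}=c_{44}$, and then $P_{23}=0$ together with $(\alpha-\beta)P_{11}P_{22}=c_{44}$. Renaming $P_{11},P_{22},P_{33}$ as $c_{22},c_{33},c_{44}$ and the three below-diagonal entries as $c_{32},c_{42},c_{43}$ gives the claimed lower-triangular shape; the two diagonal relations then yield the advertised expressions for $\alpha$ and $\beta$.

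For the converse, given any invertible lower-triangular $P$ with diagonal $(c_{22},c_{33},c_{44})$ and arbitrary below-diagonal entries, the displayed formulas define $\alpha,\beta$ with $\alpha^{2}-\beta^{2}=c_{44}^{2}/(c_{22}^{3}c_{33})\ne 0$; the two matrix identities above hold by construction, which reverses the previous computation and yields~\eqref{EqVarphiAB} for all homogeneous $a,b\in J$. Together with $\varphi(1_{A_{0}})=\frac{1}{\alpha+\beta}1_{A_{0}}$ this gives a genuine pseudoautomorphism. Specializing to $\alpha=1,\beta=0$ forces $c_{22}=c_{33}$ and $c_{44}=c_{22}^{2}$, which recovers $\Aut(A_{0})$ as stated. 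Since $\Aut(A_{0})=\ker\tau$, it is normal; the rule $\left(\begin{smallmatrix}\alpha&\beta\\\beta&\alpha\end{smallmatrix}\right)\mapsto\varphi$ with $(c_{22},c_{33},c_{44})=(\alpha-\beta,\alpha+\beta,(\alpha-\beta)(\alpha^{2}-\beta^{2}))$ and zero below-diagonal entries defines a group-theoretic section of $\tau\colon\widetilde{\Aut}(A_{0})\to Q$ whose image intersects $\Aut(A_{0})$ trivially, so $\widetilde{\Aut}(A_{0})\cong\Aut(A_{0})\leftthreetimes Q$. The final identification with the group of invertible lower-triangular $3\times 3$ matrices is the obvious bijection $\varphi\mapsto P$, which is multiplicative because $\varphi(1_{A_{0}})=c_{22}^{2}/c_{44}$ is determined by $P$. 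The main obstacle is keeping the middle paragraph organized: once one decomposes $\theta$ into its symmetric and antisymmetric parts and exploits that the two parts of $P^{T}\theta P$ are decoupled in the pseudoautomorphism equation, the rest is linear-algebra bookkeeping.
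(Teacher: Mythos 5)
Your proof is correct and follows essentially the same route as the paper's: determine $\varphi(1_{A_0})$ via Proposition~\ref{PropositionUnitPseudoAuto}, use invariance of $J=J(A_0)$ and of $J^2=\langle w \rangle_\mathbbm{k}$ to get the block-triangular shape, and reduce~\eqref{EqVarphiAB} on $J$ to a matrix identity on the Gram matrix of $\theta$, which forces the remaining zero entries and the stated formulas for $\alpha$ and $\beta$. Your splitting of $\theta$ into symmetric and antisymmetric parts is only a cosmetic reorganization of the paper's direct entrywise comparison with the $2\times 2$ block $B_0$; you also spell out the semidirect-product and lower-triangular-group identifications that the paper leaves implicit.
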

\begin{proof}
	Let $\varphi \in \widetilde{\Aut}(A_0)$
	satisfy~\eqref{EqVarphiAB} for some $\alpha, \beta \in \mathbbm{k}$.
	Proposition~\ref{PropositionUnitPseudoAuto}
	implies $\varphi(1)=\frac{1}{\alpha+\beta}$.
	Since $J^3=0$, the ideal $J$ is the Jacobson radical of $A_0$.
	Hence by Theorem~\ref{TheoremTGradedGActionInvRad} we have
	$\varphi(J)=J$.
	Moreover, $\varphi(w)=\varphi(uv)\in \varphi(J)^2 = J^2= \langle w \rangle_\mathbbm{k}$.
	Denote by $C$ the matrix of $\varphi$
	in the basis $1, u, v, w$. 
	Then the remarks above imply that $$C=\left(\begin{array}{cccc}
	\frac{1}{\alpha+\beta} &  0   & 0 & 0 \\
	0 &  c_{22}   & c_{23} & 0 \\
	0 &  c_{32}   & c_{33} & 0 \\
	0 &  c_{42}   & c_{43} & c_{44}
	\end{array}\right).$$
	
	Let $C_0 =\left(\begin{array}{cc}
	c_{22}   & c_{23} \\
	c_{32}   & c_{33} \\
	\end{array}\right)$, $B_0 = \left(\begin{array}{rr}
	1   & 1 \\
	-1   & 0 \\
	\end{array}\right)$, $a=\xi_1 u + \xi_2 v$, $b=\eta_1 u + \eta_2 v$,
	$\boldsymbol\xi=\left(\begin{smallmatrix}
	\xi_1 \\
	\xi_2
	\end{smallmatrix}\right)$, $\boldsymbol\eta=\left(\begin{smallmatrix}
	\eta_1 \\
	\eta_2
	\end{smallmatrix}\right)$, $\xi_1, \xi_2, \eta_1, \eta_2 \in \mathbbm{k}$. Then~\eqref{EqVarphiAB}
	can be rewritten as 
	$$c_{44}\boldsymbol\xi^T B_0 \boldsymbol\eta=\alpha \boldsymbol\xi^T C_0^T B_0 C_0 \boldsymbol\eta
	+ \beta \boldsymbol\eta^T C_0^T B_0 C_0 \boldsymbol\xi,$$
	$$c_{44}\boldsymbol\xi^T B_0 \boldsymbol\eta=\alpha \boldsymbol\xi^T C_0^T B_0 C_0 \boldsymbol\eta
	+ \beta \boldsymbol\xi^T C_0^T B_0^T C_0 \boldsymbol\eta,$$
	$$\boldsymbol\xi^T \left(c_{44} B_0\right) \boldsymbol\eta=\boldsymbol\xi^T \left( C_0^T \left(\alpha B_0
	+  \beta B_0^T\right)  C_0\right) \boldsymbol\eta.$$
	
	Since the columns $\boldsymbol\xi$ and $\boldsymbol\eta$ are arbitrary,
	we get 
	$$c_{44} B_0 = C_0^T \left(\alpha B_0
	+  \beta B_0^T\right)  C_0,$$
	$$\left(\begin{array}{rc}
	c_{44}   & c_{44} \\
	-c_{44}   & 0 \\
	\end{array}\right) = C_0^T \left(\begin{array}{cc}
	\alpha+\beta   & \alpha-\beta \\
	\beta-\alpha   & 0 \\
	\end{array}\right)  C_0,$$
	$$\left(\begin{array}{rc}
	c_{44}   & c_{44} \\
	-c_{44}   & 0 \\
	\end{array}\right) = \left(\begin{array}{cc}
	(\alpha+\beta)c_{22}^2   & (\alpha+\beta)c_{22}c_{23} + (\alpha-\beta)\det C_0 \\
	(\alpha+\beta)c_{22}c_{23} +(\beta-\alpha)\det C_0   & (\alpha+\beta)c_{23}^2 \\
	\end{array}\right).$$
	Hence $c_{23}=0$, $c_{44}= (\alpha+\beta)c_{22}^2$ and $c_{44} = (\alpha-\beta)\det C_0 = (\alpha-\beta) c_{22}c_{33}$. Therefore, 
	$$C=\left(\begin{array}{cccc}
	\frac{1}{\alpha+\beta} &  0   & 0 & 0 \\
	0 &  c_{22}   & 0 & 0 \\
	0 &  c_{32}   & c_{33} & 0 \\
	0 &  c_{42}   & c_{43} & c_{44}
	\end{array}\right)$$ and
	$$\alpha = \frac{c_{44}}{2c_{22}^2} \frac{c_{33}+c_{22}}{c_{33}},\qquad
	\beta = \frac{c_{44}}{2c_{22}^2} \frac{c_{33}-c_{22}}{c_{33}}.$$
	Conversely, if $\alpha$, $\beta$ and $c_{ij}$ satisfy the equalities above,
	\eqref{EqVarphiAB} holds for all $a,b \in \langle u, v \rangle_\mathbbm{k}$.
	The fact that~\eqref{EqVarphiAB} holds if $a \in \lbrace 1, w \rbrace$
	or $b \in \lbrace 1, w \rbrace$ is checked trivially.
\end{proof}

\section{Graded polynomial $H$-identities}

If an algebra is endowed with a generalized $H$-action compatible with a $T$-grading, then one can consider the corresponding polynomial identities.

Let $T$ be a set. Recall that by $X^{T\text{-}\mathrm{gr}}$ we denote the disjoint union $\bigsqcup_{t \in T}X^{(t)}$ of sets $X^{(t)} = \{ x^{(t)}_1, x^{(t)}_2, \ldots \}$. The absolutely free non-associative algebra 
$\mathbbm{k}\left\lbrace X^{T\text{-}\mathrm{gr}} \right\rbrace$
 has a natural $\mathbb Z$-grading $\mathbbm{k}\left\lbrace X^{T\text{-}\mathrm{gr}} \right\rbrace =  \bigoplus\limits_{n=1}^\infty \mathbbm{k}\left\lbrace X^{T\text{-}\mathrm{gr}}\right\rbrace^{(n)}$ where $ \mathbbm{k}\left\lbrace X^{T\text{-}\mathrm{gr}}\right\rbrace^{(n)}$  is the linear span of all monomials of total degree $n$.
Let $H$ be an arbitrary associative algebra with $1$ over $\mathbbm{k}$.
Consider the algebra $$\mathbbm{k}\lbrace X^{T\text{-}\mathrm{gr}}|H \rbrace := \bigoplus_{n=1}^\infty H^{{}\otimes n} \otimes \mathbbm{k} \left\lbrace X^{T\text{-}\mathrm{gr}}\right\rbrace^{(n)}$$
with the multiplication $(u_1 \otimes w_1)(u_2 \otimes w_2):=(u_1 \otimes u_2) \otimes w_1w_2$
for all $u_1 \in  H^{{}\otimes j}$, $u_2 \in  H^{{}\otimes k}$,
$w_1 \in \mathbbm{k} \left\lbrace X^{T\text{-}\mathrm{gr}} \right\rbrace^{(j)}$, $w_2 \in \mathbbm{k} \left\lbrace X^{T\text{-}\mathrm{gr}} \right\rbrace^{(k)}$.
We use the notation $$\left(x_{i_1}^{(t_1)}\right)^{h_1}
\left(x_{i_2}^{(t_2)}\right)^{h_2}\ldots \left(x_{i_n}^{(t_n)}\right)^{h_n} := (h_1 \otimes h_2 \otimes \dots \otimes h_n) \otimes x_{i_1}^{(t_1)}
x_{i_2}^{(t_2)}\ldots x_{i_n}^{(t_n)}.$$ Here $h_1 \otimes h_2 \otimes \dots \otimes h_n \in H^{{}\otimes n}$,
$x_{i_1}^{(t_1)}
x_{i_2}^{(t_2)}\ldots x_{i_n}^{(t_n)} \in \mathbbm{k} \left\lbrace X^{T\text{-}\mathrm{gr}} \right\rbrace^{(n)}$. 
In addition, we identify $x_i^{(t)}$ with $\left(x_i^{(t)}\right)^{1_H}$ and treat $X^{T\text{-}\mathrm{gr}}$ as a subset of 
$\mathbbm{k} \lbrace X^{T\text{-}\mathrm{gr}}|H \rbrace$.

If $(\gamma_\beta)_{\beta \in \Lambda}$ is a basis in $H$, 
then $\mathbbm{k}\lbrace X^{T\text{-}\mathrm{gr}}|H \rbrace$ is isomorphic to the absolutely free non-associative algebra over $\mathbbm{k}$ with free formal  generators $\left(x^{(t)}_i\right)^{\gamma_\beta}$, $t\in T$, $\beta \in \Lambda$, $i \in \mathbb N$.
We refer to the elements
of $\mathbbm{k}\lbrace X^{T\text{-}\mathrm{gr}}|H \rbrace$ as \textit{$T$-graded $H$-polynomials}.

Let $A=\bigoplus_{t\in T} A^{(t)}$ be $T$-graded algebra with a compatible generalized $H$-action.
Suppose a map $\psi \colon X^{T\text{-}\mathrm{gr}} \to A$ satisfies the property $\psi\left(X^{(t)}\right) \subseteq A^{(t)}$ for every $t\in T$. Then $\psi$ has the unique homomorphic extension $\bar\psi
\colon \mathbbm{k} \lbrace X^{T\text{-}\mathrm{gr}}|H \rbrace \to A$ such that $\bar\psi\left(\left(x^{(t)}_i\right)^h\right)=h\psi\left(x_i^{(t)}\right)$
for all $i \in \mathbb N$ and $h \in H$.
We say that $f \in \mathbbm{k}\lbrace X^{T\text{-}\mathrm{gr}}|H \rbrace$
is a \textit{$T$-graded polynomial $H$-identity} of $A$ if $\bar\psi(f)=0$
for all such maps $\psi \colon X^{T\text{-}\mathrm{gr}} \to A$. In other words, $f=f\left(x^{(t_1)}_{i_1}, \ldots, x^{(t_s)}_{i_s}\right)$
is a $T$-graded  $H$-identity of $A$
if and only if $f\left(a^{(t_1)}_1, \ldots, a^{(t_s)}_s\right)=0$
for all $a^{(t_j)}_j \in A^{(t_j)}$, $1 \leqslant j \leqslant s$.
In this case we write $f \equiv 0$.
The set $\Id^{T\text{-}\mathrm{gr}, H}(A)$ of all $T$-graded polynomial $H$-identities
of $A$ is an ideal of $\mathbbm{k}\lbrace X^{T\text{-}\mathrm{gr}}|H \rbrace$.

\begin{remark}
	Note that here we do not consider any $H$-action and any grading on $\mathbbm{k} \lbrace X^{T\text{-}\mathrm{gr}}|H \rbrace$.
	However, extending the category of graded algebras with a generalized $H$-action
	analogously to what was done in~\cite[Section 7]{ASGordienko16}, we can make $\mathbbm{k} \lbrace -|H \rbrace$ a free functor corresponding to a free-forgetful adjunction.
	
\end{remark}

Denote by $W_n^{T\text{-}\mathrm{gr}, H}$ the space of all multilinear non-associative $T$-graded $H$-polynomials
in $x_1, \ldots, x_n$, $n\in\mathbb N$, i.e.
 $$W_n^{T\text{-}\mathrm{gr}, H} := \left\langle\left. \left(x_{\sigma(1)}^{(t_1)}\right)^{h_1} \left(x_{\sigma(2)}^{(t_2)}\right)^{h_2}\ldots \left(x_{\sigma(n)}^{(t_n)}\right)^{h_n} \,\right|\, \sigma \in S_n,\ t_i\in T,\ h_i\in H\right\rangle_\mathbbm{k} \subset \mathbbm{k}\lbrace X|H \rbrace.$$
(We consider all possible arrangements of brackets.)

The number $c_n^{T\text{-}\mathrm{gr}, H}(A):=\dim\left(\frac{W_n^{T\text{-}\mathrm{gr}, H}}{W_n^{T\text{-}\mathrm{gr}, H}\cap\, \Id^{T\text{-}\mathrm{gr}, H}(A)\strut}\right)$
is called the $n$th \textit{codimension of $T$-graded polynomial $H$-identities}
or the $n$th \textit{the $T$-graded $H$-codimension} of $A$.

The limit $\PIexp^{T\text{-}\mathrm{gr}, H}(A):=\lim\limits_{n\rightarrow\infty} \sqrt[n]{c^{T\text{-}\mathrm{gr}, H}_n(A)}$,
if it exists, is called \textit{the exponent of codimension growth of $T$-graded polynomial $H$-identities} or \textit{the $T$-graded $H$-PI-exponent} of~$A$.

One of the main tools in the investigation of polynomial
identities is provided by the representation theory of symmetric groups.
The symmetric group $S_n$  acts
on the space $\frac {W^{T\text{-}\mathrm{gr}, H}_n}{W^{T\text{-}\mathrm{gr}, H}_{n}
	\cap \Id^{T\text{-}\mathrm{gr}, H}(A)\strut}$
by permuting the variables inside each set $X^{(t)}$: $$\sigma \left(x^{(t_1)}_{i_1}\right)^{h_1}\ldots \left(x^{(t_n)}_{i_n}\right)^{h_n}
:= \left(x^{(t_1)}_{\sigma(i_1)}\right)^{h_1}\ldots \left(x^{(t_n)}_{\sigma(i_n)}\right)^{h_n}$$
for $n\in\mathbb N$, $\sigma \in S_n$, $1\leqslant i_k\leqslant n$, $1\leqslant k \leqslant n$.
If the characteristic of the base field $\mathbbm{k}$ is zero, 
then irreducible $\mathbbm{k}S_n$-modules are described by partitions
$\lambda=(\lambda_1, \dots, \lambda_s)\vdash n$ and their
Young diagrams $D_\lambda$.
The character $\chi^{T\text{-}\mathrm{gr}, H}_n(A)$ of the
$\mathbbm{k}S_n$-module $\frac {W^{T\text{-}\mathrm{gr}, H}_n}{W^{T\text{-}\mathrm{gr}, H}_n
	\cap \Id^{T\text{-}\mathrm{gr}, H}(A)\strut}$ is
called the $n$th
\textit{cocharacter} of $T$-graded polynomial $H$-identities of $A$.
We can rewrite it as
a sum $$\chi^{T\text{-}\mathrm{gr}, H}_n(A)=\sum_{\lambda \vdash n}
m(A, T\text{-}\mathrm{gr}, H, \lambda)\chi(\lambda)$$ of
irreducible characters $\chi(\lambda)$.
The number $\ell_n^{T\text{-}\mathrm{gr}, H}(A):=\sum\limits_{\lambda \vdash n}
m(A, T\text{-}\mathrm{gr}, H, \lambda)$ is called the $n$th
\textit{colength} of $T$-graded polynomial $H$-identities of $A$.
We refer the reader to~\cite{Bahturin, DrenKurs, ZaiGia}
for an account
of $S_n$-representations and their applications to polynomial
identities.

\begin{remark}\label{RemarkPolynomialHIdentities}
	If $A$ is an ungraded algebra with a generalized $H$-action, then we may treat $A$ as a $T$-graded algebra with a generalized $H$-action for $T=\lbrace 1 \rbrace$. $\lbrace 1 \rbrace$-graded polynomial $H$-identities of $A$ are called just \textit{polynomial $H$-identities}, $W^H_n:=W^{\lbrace 1 \rbrace\text{-}\mathrm{gr}, H}_n$, $c^H_n(A):=c^{\lbrace 1 \rbrace\text{-}\mathrm{gr}, H}_n(A)$,
	$\chi^H_n(A):=\chi^{\lbrace 1 \rbrace\text{-}\mathrm{gr}, H}_n(A)$, $m(A, H, \lambda):=m(A, \lbrace 1 \rbrace\text{-}\mathrm{gr}, H, \lambda)$, $\ell_n^H(A):=\ell_n^{\lbrace 1 \rbrace\text{-}\mathrm{gr}, H}(A)$.	
\end{remark}

\begin{remark}
Analogously to graded polynomial identities and polynomial $H$-identities one can introduce the Lie algebra  $L(
X^{T\text{-}\mathrm{gr}}|H)$ and the associative algebra $\mathbbm{k}\langle
 X^{T\text{-}\mathrm{gr}}|H\rangle$, define for a $T$-graded Lie (resp. associative) algebra with a compatible generalized $H$-action the corresponding $T$-graded polynomial $H$-identities and show
 that codimensions, colengths and cocharacters do not depend on whether  the $T$-graded
 $H$-polynomials are taken from $\mathbbm{k}\lbrace
 X^{T\text{-}\mathrm{gr}}|H\rbrace$ or $L(
 X^{T\text{-}\mathrm{gr}}|H)$ (resp. $\mathbbm{k}\langle
 X^{T\text{-}\mathrm{gr}}|H\rangle$). 
\end{remark}

In Theorem~\ref{TheoremGradGenActionReplace} 
it was shown that every generalized $H$-action compatible with a $T$-grading is reduced to a generalized $\mathbbm{k}^T\otimes H$-action. Below we show that the corresponding codimensions coincide.

\begin{proposition}\label{PropositionCnTGrHCnFTotimesH}
Let $\Gamma \colon A=\bigoplus_{t\in T} A^{(t)}$ be a $T$-grading on an algebra $A$ over a field $\mathbbm{k}$ where $T$ is a set.
Suppose $A$ is endowed with a generalized $H$-action of some associative algebra $H$ with $1$
such that $HA^{(t)}\subseteq A^{(t)}$ for all $t\in T$ and $\supp \Gamma$ is finite.
Then $$c_n^{T\text{-}\mathrm{gr}, H}(A)=c_n^{\mathbbm{k}^T\otimes H}(A)\text{\quad and\quad}\chi_n^{T\text{-}\mathrm{gr}, H}(A)=\chi_n^{\mathbbm{k}^T\otimes H}(A)\text{ for all }n\in \mathbb N.$$
 If, in addition, $\ch \mathbbm{k} = 0$, then $\ell_n^{T\text{-}\mathrm{gr}, H}(A)=\ell_n^{\mathbbm{k}^T\otimes H}(A)$.
 (Recall that $c_n^{\mathbbm{k}^T\otimes H}(A)$, $\chi_n^{\mathbbm{k}^T\otimes H}(A)$, $\ell_n^{\mathbbm{k}^T\otimes H}(A)$ were defined in Remark~\ref{RemarkPolynomialHIdentities} above.)
\end{proposition}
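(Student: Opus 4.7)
The plan is to construct a pair of mutually inverse $\mathbbm{k}S_n$-equivariant linear isomorphisms between the quotient spaces
$\frac{W_n^{T\text{-}\mathrm{gr}, H}}{W_n^{T\text{-}\mathrm{gr}, H}\,\cap\,\Id^{T\text{-}\mathrm{gr}, H}(A)}$ and $\frac{W_n^{\mathbbm{k}^T\otimes H}}{W_n^{\mathbbm{k}^T\otimes H}\,\cap\,\Id^{\mathbbm{k}^T\otimes H}(A)}$. Once such an isomorphism is in place, equality of dimensions, characters and (in characteristic $0$) multiplicities of irreducibles yields at once $c_n^{T\text{-}\mathrm{gr}, H}(A)=c_n^{\mathbbm{k}^T\otimes H}(A)$, $\chi_n^{T\text{-}\mathrm{gr}, H}(A)=\chi_n^{\mathbbm{k}^T\otimes H}(A)$ and $\ell_n^{T\text{-}\mathrm{gr}, H}(A)=\ell_n^{\mathbbm{k}^T\otimes H}(A)$.

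On monomials, with bracketing preserved, I would define
\[
\Phi\bigl((x_{\sigma(1)}^{(t_1)})^{h_1}\ldots(x_{\sigma(n)}^{(t_n)})^{h_n}\bigr) := x_{\sigma(1)}^{q_{t_1}\otimes h_1}\ldots x_{\sigma(n)}^{q_{t_n}\otimes h_n},
\]
where the $q_t\in\mathbbm{k}^T$ are the delta functions used in the proof of Theorem~\ref{TheoremGradGenActionReplace}, and
\[
\Psi\bigl(x_{\sigma(1)}^{q_1\otimes h_1}\ldots x_{\sigma(n)}^{q_n\otimes h_n}\bigr) := \sum_{(t_1,\ldots,t_n)\in(\supp\Gamma)^n} q_1(t_1)\cdots q_n(t_n)\,(x_{\sigma(1)}^{(t_1)})^{h_1}\ldots(x_{\sigma(n)}^{(t_n)})^{h_n},
\]
extended to arbitrary exponents $u_i\in\mathbbm{k}^T\otimes H$ by multilinearity in the $q_i$ and $h_i$, which is forced by the universal property of the tensor product once $\Psi$ is prescribed on elementary tensors. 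The finiteness of the sum defining $\Psi$ is where the hypothesis that $\supp\Gamma$ is finite is used. Both maps keep the position of each variable and act on its subscript through the identity, so they commute with the $S_n$-action permuting variable indices.

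Next I would verify that $\Phi$ and $\Psi$ send polynomial identities to polynomial identities. For $\Phi$ this is immediate from the formula $(q_t\otimes h)\cdot a^{(s)}=\delta_{t,s}\,h\cdot a^{(s)}$: substituting $a_i\in A^{(t_i)}$ into $\Phi(f)$ reproduces the evaluation of $f$. For $\Psi$ the point is that for $a_i\in A$ one expands $a_i=\sum_{t\in\supp\Gamma} a_i^{(t)}$ and the sum in $\Psi$ exactly matches the resulting multilinear expansion of $g$ on homogeneous evaluations. The composition $\Psi\circ\Phi$ applied to a monomial with grade tuple $(t_1,\ldots,t_n)$ collapses by orthogonality of the $q_t$ to the original monomial if every $t_i\in\supp\Gamma$, and otherwise lies in $\Id^{T\text{-}\mathrm{gr}, H}(A)$ because $A^{(t_i)}=0$. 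The composition $\Phi\circ\Psi$ replaces an exponent $q_i\otimes h_i$ by $\tilde q_i\otimes h_i$ with $\tilde q_i:=\sum_{t\in\supp\Gamma}q_i(t)\,q_t$; since $q_i$ and $\tilde q_i$ agree on $\supp\Gamma$ and $\mathbbm{k}^T$ acts on $A$ only through its values on $\supp\Gamma$, the difference $x^{q_i\otimes h_i}-x^{\tilde q_i\otimes h_i}$ is in $\Id^{\mathbbm{k}^T\otimes H}(A)$.

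The main technical point is confirming that $\Psi$ is well-defined as a linear map on all of $W_n^{\mathbbm{k}^T\otimes H}$, not merely on monomials whose exponents are elementary tensors $q\otimes h$; this is precisely what the universal property of the $n$-fold tensor product (applied position by position in each monomial) provides, so there is no genuine obstruction. Descending to the quotients, $\Phi$ and $\Psi$ induce mutually inverse $\mathbbm{k}S_n$-equivariant isomorphisms, and the three asserted equalities follow.
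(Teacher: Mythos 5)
Your proof is correct and takes essentially the same approach as the paper: your $\Phi$ and $\Psi$ are exactly the restrictions to $W_n$ of the algebra homomorphisms $\eta$ and $\xi$ used there, given by $\left(x_i^{(t)}\right)^h\mapsto x_i^{q_t\otimes h}$ and $x_i^{q\otimes h}\mapsto\sum_{t\in\supp\Gamma}q(t)\left(x_i^{(t)}\right)^h$, with the same verification that they preserve the ideals of identities and are mutually inverse modulo identities. The only cosmetic difference is that the paper defines the maps on the full free algebras and passes to the relatively free quotients before restricting to the multilinear parts, whereas you work directly on $W_n$.
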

\begin{proof} 
Let $\xi \colon \mathbbm{k}\lbrace X | \mathbbm{k}^T \otimes H \rbrace \to \mathbbm{k}\lbrace X^{T\text{-}\mathrm{gr}} | H \rbrace$ be an algebra homomorphism defined by $\xi(x_i^{q\otimes h}) = \sum\limits_{t\in\supp \Gamma} q(t)\left(x^{(t)}_i\right)^h$ for all $i\in\mathbb N$, $q\in \mathbbm{k}^T$, $h\in H$. Let $f\in \Id^{\mathbbm{k}^T\otimes H}(A)$. Consider an arbitrary homomorphism $\psi \colon  
\mathbbm{k}\lbrace X^{T\text{-}\mathrm{gr}} | H \rbrace \to A$ such that
$\psi\left(x^{(t)}_i\right)\in A^{(t)}$ and $\psi\left( \left(x^{(t)}_i\right)^h\right)=h\psi\left(x^{(t)}_i\right)$
for all $t\in T$, $i\in\mathbb N$ and $h\in H$. Then for the algebra homomorphism $\psi\xi \colon \mathbbm{k}\lbrace X | \mathbbm{k}^T \otimes H \rbrace
\to A$ we have $$\psi\xi(x_i^{q\otimes h})=\sum\limits_{t\in\supp \Gamma} q(t)\psi\left(\left(x^{(t)}_i\right)^h\right)=
(q\otimes h)\sum\limits_{t\in\supp \Gamma} \psi\left(x^{(t)}_i\right)=(q\otimes h)\,\psi\xi(x_i).$$
Thus for every such homomorphism $\psi$ the equality $\psi\xi(f) =0$ holds. Hence
 $\xi(f)\in \Id^{T\text{-}\mathrm{gr}, H}(A)$ and $\xi\left(\Id^{\mathbbm{k}^T\otimes H}(A)\right)\subseteq \Id^{T\text{-}\mathrm{gr}, H}(A)$.
Denote by $$\tilde \xi \colon \mathbbm{k}\lbrace X | \mathbbm{k}^T \otimes H \rbrace/\Id^{\mathbbm{k}^T\otimes H}(A) \to \mathbbm{k}\lbrace X^{T\text{-}\mathrm{gr}} | H \rbrace/\Id^{T\text{-}\mathrm{gr}, H}(A)$$ the homomorphism induced by $\xi$.

Let $\eta \colon  \mathbbm{k}\lbrace X^{T\text{-}\mathrm{gr}} | H \rbrace \mathrel{\to} \mathbbm{k}\lbrace X | \mathbbm{k}^T \otimes H \rbrace$
 be the algebra homomorphism defined by $\eta\left(\left(x^{(t)}_i\right)^h\right) = x_i^{q_t \otimes h}$ for $i\in \mathbb N$
and $t\in T$. 
Consider an arbitrary $T$-graded polynomial $H$-identity $f\in \Id^{T\text{-}\mathrm{gr}, H}(A)$.
Let $\psi \colon  \mathbbm{k}\lbrace X | \mathbbm{k}^T \otimes H \rbrace \to A$ be the homomorphism
satisfying $\psi(x_i^{q\otimes h})=(q\otimes h)\psi(x_i)$ for all $i\in\mathbb N$, $q\in \mathbbm{k}^T$ and $h\in H$.
Then for all $i\in\mathbb N$ and $g, t \in T$ we have
$$q_g \psi\eta\left(x^{(t)}_i\right) = q_g\psi\left(x^{q_t\otimes 1}_i\right)=q_g q_t \psi(x_i)
=\left\lbrace \begin{array}{lll} 0 & \text{ for } & g\ne t,\\
                              \psi\eta\left(x^{(t)}_i\right) & \text{ for } & g=t. \end{array}\right.$$
 Hence $\psi\eta\left(x^{(t)}_i\right) \in A^{(t)}$.
 Moreover, $$ \psi\eta\left(\left(x^{(t)}_i\right)^h\right) = 
 \psi\left(x_i^{q_t\otimes h}\right)=h\,\psi\left(x_i^{q_t\otimes 1}\right)= h\, \psi\eta\left(x^{(t)}_i\right).$$
 Thus $\psi\eta(f)=0$ and $\eta(\Id^{T\text{-}\mathrm{gr}, H}(A)) \subseteq \Id^{\mathbbm{k}^T\otimes H}(A)$.
Denote by $$\tilde\eta \colon  \mathbbm{k}\lbrace X^{T\text{-}\mathrm{gr}} | H \rbrace/\Id^{T\text{-}\mathrm{gr}, H}(A) \to
\mathbbm{k}\lbrace X | \mathbbm{k}^T \otimes H \rbrace/\Id^{\mathbbm{k}^T\otimes H}(A)$$ the induced homomorphism.

Below we use the notation $\bar f = f + \Id^{\mathbbm{k}^T\otimes H}(A) \in \mathbbm{k}\lbrace X | \mathbbm{k}^T \otimes H \rbrace/\Id^{\mathbbm{k}^T\otimes H}(A)$ for $f\in
\mathbbm{k}\lbrace X | \mathbbm{k}^T \otimes H \rbrace$ and  $\bar f = f + \Id^{T\text{-}\mathrm{gr}, H}(A) \in \mathbbm{k}\lbrace X^{T\text{-}\mathrm{gr}} | H \rbrace/\Id^{T\text{-}\mathrm{gr}, H}(A)$ for $f\in \mathbbm{k}\lbrace X^{T\text{-}\mathrm{gr}} | H \rbrace$.
Note that $$x^{q\otimes h}_i - \sum\limits_{t\in\supp \Gamma} q(t) x^{q_t\otimes h}_i\in \Id^{\mathbbm{k}^T\otimes H}(A)$$ for all $q\in \mathbbm{k}^T$, $h\in H$ and $i\in\mathbb N$.
Hence $$\tilde\eta\tilde\xi\left(\overline{x^{q\otimes h}_i}\right)=\tilde\eta\left(
\sum\limits_{t\in\supp \Gamma} q(t) \overline{\left(x^{(t)}_i\right)^h}\right)
=\sum\limits_{t\in\supp \Gamma} q(t) \overline{x^{q_t\otimes h}_i} = \overline{x^{q\otimes h}_i}$$
for all $q\in \mathbbm{k}^T$, $h\in H$ and $i\in\mathbb N$. 
Therefore, $$\tilde\eta\tilde\xi=\id_{\mathbbm{k}\lbrace X | \mathbbm{k}^T \otimes H \rbrace/\Id^{\mathbbm{k}^T\otimes H}(A)}$$
since the algebra $\mathbbm{k}\lbrace X | \mathbbm{k}^T \otimes H \rbrace/\Id^{\mathbbm{k}^T\otimes H}(A)$ is generated by $\overline{x^{q\otimes h}_i}$ where $q\in \mathbbm{k}^T$, $h\in H$ and $i\in\mathbb N$.
Furthermore, $\tilde\xi\tilde\eta\Biggl(\overline{\left(x^{(t)}_i\right)^h}\Biggr)=
\tilde\xi\left(\overline{x^{q_t\otimes h}_i}\right)=\overline{\left(x^{(t)}_i\right)^h}$ for all $t\in \supp \Gamma$, $h\in H$ and $i\in \mathbb N$.
Hence $\tilde\xi\tilde\eta=\id_{\mathbbm{k}\lbrace X^{T\text{-}\mathrm{gr}} | H \rbrace/\Id^{T\text{-}\mathrm{gr}, H}(A)}$
and $$\mathbbm{k}\lbrace X^{T\text{-}\mathrm{gr}} | H \rbrace/\Id^{T\text{-}\mathrm{gr}, H}(A) \cong \mathbbm{k}\lbrace X | \mathbbm{k}^T \otimes H \rbrace/\Id^{\mathbbm{k}^T\otimes H}(A)$$
as algebras. The restriction of $\tilde\xi$ is an isomorphism of $\mathbbm{k}S_n$-modules
$\frac{W^{\mathbbm{k}^T\otimes H}_n}{W^{\mathbbm{k}^T\otimes H}_n \cap \Id^{\mathbbm{k}^T\otimes H}(A)}$ and $\frac{W^{T\text{-}\mathrm{gr}, H}_n}{W^{T\text{-}\mathrm{gr}, H}_n\cap \Id^{T\text{-}\mathrm{gr}, H}(A)}$.  Thus $$c^{\mathbbm{k}^T\otimes H}_n(A)=\dim \frac{W^{\mathbbm{k}^T\otimes H}_n}{W^{\mathbbm{k}^T\otimes H}_n \cap \Id^{\mathbbm{k}^T\otimes H}(A)}
= \dim\frac{W^{T\text{-}\mathrm{gr}, H}_n}{W^{T\text{-}\mathrm{gr}, H}_n\cap \Id^{T\text{-}\mathrm{gr}, H}(A)}=c^{T\text{-}\mathrm{gr}, H}_n(A)$$
and  $\chi_n^{T\text{-}\mathrm{gr}, H}(A)=\chi_n^{\mathbbm{k}^T\otimes H}(A)$
for all $n\in \mathbb N$.
 If, in addition, $\ch \mathbbm{k} = 0$, the above implies $\ell_n^{T\text{-}\mathrm{gr}, H}(A)=\ell_n^{\mathbbm{k}^T\otimes H}(A)$ too.
\end{proof}

If an algebra $A$ graded by a set $T$ is endowed with a $T$-graded action of a group $G$,
then $A$ is an algebra with a generalized $\mathbbm{k}G$-action compatible with the $T$-grading.
Therefore we may call the corresponding $T$-graded polynomial $\mathbbm{k}G$-identities of $A$
its \textit{$T$-graded polynomial $G$-identities} and define $\Id^{T\text{-}\mathrm{gr}, G}(A):=\Id^{T\text{-}\mathrm{gr}, \mathbbm{k}G}(A)$ and $c_n^{T\text{-}\mathrm{gr}, G}(A):=c_n^{T\text{-}\mathrm{gr}, \mathbbm{k}G}(A)$.

\begin{example}\label{ExampleGrGId}
	Consider the $\mathbb{Z}/2\mathbb{Z}$-grading $\UT_2(\mathbbm{k})=\UT_2(\mathbbm{k})^{(0)}\oplus \UT_2(\mathbbm{k})^{(1)}$
	on the algebra $\UT_2(\mathbbm{k})$ of upper triangular $2\times 2$ matrices over a field $\mathbbm{k}$
	from Example~\ref{ExampleIdGr}. Define a graded $\mathbbm{k}^\times$-action on $\UT_2(\mathbbm{k})$
	by $\alpha \cdot e_{12} := \alpha e_{12}$ and $\alpha \cdot e_{ii} := e_{ii}$ for $i=1,2$.
	Then $$\left( x^{(0)} \right)^\alpha - x^{(0)},\ \left( x^{(1)} \right)^\alpha - \alpha x^{(1)}\in \Id^{\mathbb{Z}/2\mathbb{Z} \text{-}\mathrm{gr}, \mathbbm{k}^\times}(A)\text{ for all }\alpha\in \mathbbm{k}^\times.$$
\end{example}

\begin{theorem}\label{TheoremTGrGActionAssoc}
Let $A$ be a finite dimensional associative algebra over a field $\mathbbm{k}$ of characteristic $0$
graded by an arbitrary group $T$ and endowed with a $T$-graded action of a group $G$.
Then \begin{enumerate}
\item either there exists $n_0$ such that $c_n^{T\text{-}\mathrm{gr}, G}(A)=0$ for $n\geqslant n_0$;
\item or there exist constants $C_1, C_2 > 0$, $r_1, r_2 \in \mathbb R$,
  $d \in \mathbb N$ such that $$C_1 n^{r_1} d^n \leqslant c^{T\text{-}\mathrm{gr}, G}_n(A)
   \leqslant C_2 n^{r_2} d^n\text{ for all }n \in \mathbb N.$$
\end{enumerate}
In particular, for every such $A$ there exists $\PIexp^{T\text{-}\mathrm{gr},G}(A):=\lim\limits_{n\to\infty} \sqrt[n]{c^{T\text{-}\mathrm{gr},G}_n(A)}\in\mathbb Z_+$
and the analog of Amitsur's holds. 
\end{theorem}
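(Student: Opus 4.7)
The plan is to reduce the statement to a previously established analog of Amitsur's conjecture for finite dimensional associative algebras with a generalized $H$-action whose Jacobson radical is $H$-invariant. All the machinery developed earlier in the paper was set up precisely for such a reduction.

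First, since $A$ is finite dimensional, the support $\supp\Gamma$ is automatically finite, so the hypotheses of Theorem~\ref{TheoremGradGenActionReplace} are satisfied: the $T$-graded $G$-action on $A$ yields a generalized $\mathbbm{k}^T\otimes\mathbbm{k}G$-action on $A$ that preserves the grading. By Proposition~\ref{PropositionCnTGrHCnFTotimesH}, the corresponding codimensions coincide,
$$c_n^{T\text{-}\mathrm{gr},G}(A)=c_n^{\mathbbm{k}^T\otimes\mathbbm{k}G}(A)\qquad\text{for all }n\in\mathbb N,$$
so it is enough to prove the two claims for the generalized $H$-codimensions with $H:=\mathbbm{k}^T\otimes\mathbbm{k}G$. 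Moreover, since $A$ is finite dimensional, the $H$-action factors through the finite-dimensional image of $H$ in $\End_\mathbbm{k}(A)$, and we may replace $H$ by this image without changing $\Id^H(A)$ or $c_n^H(A)$.

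Second, by Theorem~\ref{TheoremTGradedGActionInvRad} the Jacobson radical $J(A)$ is stable under the generalized $\mathbbm{k}^T\otimes\mathbbm{k}G$-action. Thus $A$ is a finite dimensional associative algebra over a field of characteristic $0$ endowed with a generalized $H$-action for a finite dimensional unital associative algebra $H$ such that $HJ(A)\subseteq J(A)$.

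Third, I would invoke directly the analog of Amitsur's conjecture in this setting, proved in~\cite{ASGordienko8} (and based on the framework of~\cite{ASGordienko3}): for any such $A$ either $c_n^H(A)=0$ for all sufficiently large $n$, or there exist $C_1,C_2>0$, $r_1,r_2\in\mathbb R$ and $d\in\mathbb N$ satisfying $C_1 n^{r_1}d^n\le c_n^H(A)\le C_2 n^{r_2}d^n$, and consequently $\PIexp^H(A)=d\in\mathbb Z_+$. Transporting this via the equality $c_n^{T\text{-}\mathrm{gr},G}(A)=c_n^H(A)$ yields the desired dichotomy and the existence of $\PIexp^{T\text{-}\mathrm{gr},G}(A)\in\mathbb Z_+$.

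The main (essentially only) point requiring attention will be verifying that the codimensions $c_n^H(A)$ defined in~\cite{ASGordienko3,ASGordienko8} via multilinear $H$-polynomials (in either the associative or the free non-associative setting) are the same objects as the codimensions $c_n^{\lbrace 1\rbrace\text{-}\mathrm{gr},H}(A)$ used here, so that the theorems of those papers apply verbatim; this is the content of the remark following Proposition~\ref{PropositionCnTGrHCnFTotimesH} on the independence of $c_n^{T\text{-}\mathrm{gr},H}$ from the choice of ambient free algebra. I expect no genuine obstacle beyond this bookkeeping, since Theorem~\ref{TheoremGradGenActionReplace}, Theorem~\ref{TheoremTGradedGActionInvRad} and Proposition~\ref{PropositionCnTGrHCnFTotimesH} were built specifically to place the $T$-graded $G$-action case within the scope of the already-known $H$-invariant-radical result.
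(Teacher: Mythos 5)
Your overall strategy coincides with the paper's: reduce to a generalized $\mathbbm{k}^T\otimes\mathbbm{k}G$-action via Theorem~\ref{TheoremGradGenActionReplace} and Proposition~\ref{PropositionCnTGrHCnFTotimesH}, establish invariance of $J(A)$ via Theorem~\ref{TheoremTGradedGActionInvRad}, and quote the known result for generalized $H$-actions. However, there is a genuine gap in the final step. The theorem you invoke, \cite[Theorem~1]{ASGordienko8}, does not apply to an arbitrary finite dimensional algebra with a generalized $H$-action and $H$-invariant radical: it additionally assumes that the semisimple quotient $A/J(A)$ decomposes as a direct sum of $H$-invariant ideals each of which is $H$-simple (this is exactly the ``$H$-invariant Wedderburn--Artin decomposition'' which, as the introduction of the paper notes, was \emph{not} removed in~\cite{ASGordienko8} --- only the Wedderburn--Mal'cev restriction was). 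For a general generalized $H$-action such a decomposition can fail: starting from a minimal $H$-invariant ideal $B_1$ of $A/J(A)$ one needs $\Ann_{\mathrm{lr}}(B_1)$ to be $H$-invariant, and nothing in the axiom~\eqref{EqGeneralizedHopf} guarantees this. The paper closes this gap with Lemma~\ref{LemmaAnnIdealTGradedGAction} and Theorem~\ref{TheoremTGradedGActionInvWedderburnArtin}, which exploit the specific form~\eqref{EqGradedAction} of a $T$-graded $G$-action (each $g$ acts by an invertible graded pseudoautomorphism) to show that annihilators of $G$-invariant graded subspaces are again $G$-invariant and graded. Your proof must include this step; invariance of the radical alone is not enough to place $A$ within the scope of the cited theorem.

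A second, smaller omission: \cite[Theorem~1]{ASGordienko8} is stated over an algebraically closed field, whereas here $\mathbbm{k}$ is only assumed to have characteristic $0$. The paper handles this by observing (as in \cite[Theorem~4.1.9]{ZaiGia}) that the codimensions $c_n^{\mathbbm{k}^T\otimes\mathbbm{k}G}(A)$ do not change under extension of the base field, so one may pass to the algebraic closure before applying the external theorem; you should add this reduction. The rest of your argument --- finiteness of the support, the identification of codimensions, and the bookkeeping about which free algebra is used to define $c_n^H$ --- is in order.
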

\begin{proof} As it was shown in Theorem~\ref{TheoremGradGenActionReplace},
$A$ is an algebra with a generalized $\mathbbm{k}^T\otimes \mathbbm{k}G$-action.
Moreover, by Proposition~\ref{PropositionCnTGrHCnFTotimesH}
we have $c_n^{T\text{-}\mathrm{gr}, G}(A)=c_n^{\mathbbm{k}^T\otimes \mathbbm{k}G}(A)$
for all $n\in\mathbb N$. As usual (the proof is analogous to \cite[Theorem~4.1.9]{ZaiGia}),
the codimensions $c_n^{\mathbbm{k}^T\otimes \mathbbm{k}G}(A)$
do not change upon an extension of the base field $\mathbbm{k}$.
Hence without loss of generality we may assume $\mathbbm{k}$ to be algebraically closed.
By Theorem~\ref{TheoremTGradedGActionInvRad}
the radical $J(A)$ is a  $\mathbbm{k}^T\otimes \mathbbm{k}G$-submodule
and by Theorem~\ref{TheoremTGradedGActionInvWedderburnArtin}
the algebra $A/J(A)$ is a direct sum of $\mathbbm{k}^T\otimes \mathbbm{k}G$-invariant
ideals $B_i$ that are $\mathbbm{k}^T\otimes \mathbbm{k}G$-simple algebras.
Now Theorem~\ref{TheoremTGrGActionAssoc} follows from~\cite[Theorem~1]{ASGordienko8}.
\end{proof}

\section*{Acknowledgments}

The author is grateful to the anonymous referee for carefully reading the manuscript and making useful remarks.

\end{document}